\newenvironment{proof}{\noindent\textbf{Proof}}{\hspace*{\fill}$\Box$\medskip}
\newtheorem{remark}{Remark}
\newtheorem{theorem}{Theorem}
\newcommand{\ip}[2]{#1 \cdot #2}
\newcommand{\pt}[1]{\dot{#1}}
\newcommand{\partialt}[1]{\partial_t {#1}}
\newcommand{\assign}{:=}
\newcommand{\tmop}[1]{\ensuremath{\operatorname{#1}}}
\newcommand{\bbR}{\mathbb{R}}
\newcommand{\bu}{\vec{u}}
\newcommand{\Ra}{\tmop{Ra}}
\renewcommand\arraystretch{1.3}
\begin{document}

\title{
  OnsagerNet: Learning Stable and Interpretable Dynamics using a Generalized Onsager Principle
}

\author{Haijun Yu}%
\email{hyu@lsec.cc.ac.cn}

\author{Xinyuan Tian}%
\email{txy@lsec.cc.ac.cn}
\affiliation{NCMIS \& LSEC, Institute of Computational Mathematics and
    Scientific/Engineering Computing, Academy of Mathematics and Systems
    Science, Chinese Academy of Sciences, Beijing 100190 and \\
School of Mathematical Sciences, University of Chinese Academy of Sciences,
Beijing 100049, China}%

\author{Weinan E}%
\email{weinan@princeton.edu}
\affiliation{Department of Mathematics and the Program in Applied and
  Computational Mathematics, Princeton University, Princeton, NJ 08544,
  USA}%

\author{Qianxiao Li}
\email{qianxiao@nus.edu.sg}
\affiliation{Department of Mathematics, National University of Singapore,
Singapore 119077}%
\affiliation{Institute of High Performance Computing, A*STAR, Singapore
138632}%


\begin{abstract}  
  We propose a systematic method for learning stable and physically
  interpretable dynamical models using sampled trajectory data
  from physical processes based on a generalized
  Onsager principle.
  The learned dynamics are autonomous ordinary differential
  equations parameterized by neural networks that retain clear
  physical structure information, such as free energy, diffusion,
  conservative motion and external forces.
  For high dimensional problems with a low dimensional slow manifold,
  an autoencoder with metric preserving regularization is introduced to find the low dimensional generalized coordinates
  on which we learn the generalized Onsager dynamics.
  Our method exhibits clear advantages over existing methods on
  benchmark problems for learning ordinary differential equations.
  We further apply this method to study Rayleigh-B\'{e}nard convection and
  learn Lorenz-like low dimensional autonomous reduced order models that capture both qualitative and quantitative
  properties of the underlying dynamics.
  This forms a general approach to building reduced order models for forced
  dissipative systems.
\end{abstract}
\maketitle

\section{Introduction}

Discovering mathematical models from observed dynamical data
is a scientific endeavor that dates back to the work of
Ptolemy, Kepler and Newton. With the recent advancements
in machine learning, data-driven approaches have become
viable alternatives to those relying purely on human insight.
The former has become an active area of research with many
proposed methodologies, which can be roughly classified
into two categories. The first is \emph{unstructured}
approaches, where the learned dynamics are not {\it a priori}
constrained to satisfy any physical principles.
Instead, minimizing reconstruction error or model complexity are dominant objectives.
For example, sparse symbolic regression methods
try to find minimal models by searching in a big dictionary of candidate analytic representations \citep{bongardAutomatedReverseEngineering2007,schmidt_distilling_2009,brunton_discovering_2016};
Numerical discretization embedding methods attempt to learn hidden
dynamics by embedding known numerical discretization schemes
{\citep{longPDENetLearningPDEs2018,raissi_multistep_2018,xieNonintrusiveInferenceReduced2018}};
Galerkin-closure methods use neural networks to reduce the projection error of traditional Galerkin methods {\citep{wanDataassistedReducedorderModeling2018,panDataDrivenDiscoveryClosure2018,wangRecurrentNeuralNetwork2020,maModelReductionMemory2019,xie_modeling_2019,wangRecurrentNeuralNetwork2020}}.
Another line of unstructured methods directly apply time-series modelling tools
such as LSTM/GRU and reservoir
computing~\cite{pathakModelFreePredictionLarge2018a,vlachasBackpropagationAlgorithmsReservoir2020,szunyoghMachineLearningBasedGlobalAtmospheric2020}
to model temporal evolution of physical processes.
For example, it is demonstrated in~\cite{pathakModelFreePredictionLarge2018a} that reservoir computing
can effectively capture the dynamics of high-dimensional chaotic systems, such as the
Kuramoto-Sivashinsky equation.
These unstructured methods can often achieve high predictive accuracy,
but the learned dynamics may not have clear physical structure and theoretical guarantee of temporal stability.
This in turn limit their ability to capture qualitative properties of the underlying model on large time intervals.
Such issues are significant in that often, the goal of building reduced models is
not to reproduce the original dynamics in its entirety, but rather to extract
some salient and physically relevant insights. This leads to the second class
of \emph{structured} approaches, where one imparts from the outset some
physically motivated constraints to the dynamical system to be learned, e.g.
gradient systems {\citep{kolterLearningStableDeep2019,gieslLyapunov2020}},
Hamiltonian systems
\citep{zhongSymplecticODENetLearning2020,jin_symplectic_2020,zhongDissipativeSymODENEncoding2020},
 etc.
These methods have the advantage that the learned models have pre-determined structures and stability may be automatically ensured.
However, works in this direction may have the limitation that the structures imposed may be
too restrictive to handle complex problems beyond benchmark examples.
We demonstrate this point in our benchmark examples to be presented later.
It is worth noting that recent works address this issue by a combination of unstructured model
fitting with information from an \textit{a priori} known, but imperfect model, e.g.~\cite{wiknerCombiningMachineLearning2020}.

In this paper, we propose a systematic method that overcomes the aforementioned limitations.
The key idea is a neural network parameterization of a reduced dynamical model, which we  call \emph{OnsagerNet}, based on a highly general extension of the Onsager principle for dissipative dynamics \citep{onsagerReciprocalRelationsIrreversible1931,onsagerReciprocalRelationsIrreversible1931a}. We choose the Onsager principle, which builds on the second law of thermodynamics and microscopic reversibility, as the starting point due to its simplicity, generality and physical motivation. It naturally gives rise to stable structures for dynamical systems in generalized coordinates, and has been used extensively to derive mathematically well-posed models for complex systems in fluid mechanics, material science, biological science, etc. (see e.g. \citep{qian_variational_2006, doi_onsager_2011,yang_hydrodynamic_2016,giga_variational_2017,jiang_application_2019, doi_application_2019a, xu_generalized_2019}).
However, there are two challenges in using it to find dynamical models with high-fidelity: \begin{enumerate}
    \item How to choose the generalized coordinates;
    \item How to determine the large amount of free coefficients in the structured dynamical system. Some in-depth discussions could be found in \citet[Chap 1]{beris_thermodynamics_1994}.
\end{enumerate}
One typical example is the modeling of liquid crystal
systems \citep{DoiEdwards1986,deGennes1993}. If the underlying system is near equilibrium,
it is possible to determine the coefficients of a reduced-order (macroscopic) model
from mathematical analysis or quasi-equilibrium approximations of the underlying microscopic model  (see e.g.
 \cite{kroger_derivation_2007,yu_nonhomogeneous_2010,e_principles_2011,han_microscopic_2015}). Otherwise,
this task becomes significantly harder, yet a lot of interesting phenomena happen in this regime. Therefore, a method to derive high fidelity macroscopic models that operate far from equilibrium without \emph{a priori} scale information is much sought after.

We tackle these two tasks in a data-driven manner.
{For the first task we learn an approximately isometric embedding to find generalized coordinates. In the linear case this reduces to the principal component analysis (PCA), and in general we modify the autoencoder (AE) \citep{hintonReducingDimensionalityData2006,rifaiContractiveAutoEncodersExplicit2011} with metric preserving regularization designed for trajectory data.}
For the second task, we avoid the explicit fitting of the prohibitively large number of response coefficients in linearization approaches. Instead, we parameterize nonlinear relationships between generalized coordinates and the physical quantities governing the dynamics (e.g. free energy, dissipation matrices) as neural networks and train them on observed dynamical data with an embedded Runge-Kutta method.
The overall approach of OnsagerNet gives rise to stable and interpretable dynamics, yet retaining a degree of generality in the structure to potentially capture complex interactions.
By interpretability, we mean that the structure of OnsagerNet parameterization has physical origins,
and its learned components can thus be used for subsequent analysis (e.g. visualizing energy landscapes, as shown in Sec.~\ref{sec:applications}).
Moreover, we show that the network architecture that we used to parameterize the hidden dynamics has a more general hypothesis space than other existing structured approaches, and can provably represent many dynamical systems with physically meaningful origins (e.g. systems described by generalized Poisson brackets). At the same time, the stability of the OnsagerNet is ensured by its internal structure motivated by physical considerations, and this does not reduce its approximation capacity when applied to dynamical data arising from dissipative processes.
We demonstrate the last point by applying the method to the well-known Rayleigh-B\'{e}nard convection (RBC) problem, based on which Lorenz discovered a minimal 3-mode ordinary differential equation (ODE) system that exhibits
chaos \citep{lorenz_deterministic_1963}.
Lorenz used three dominant modes obtained from linear stability analysis as reduced coordinates and derived the governing equations by a Galerkin projection. As a severely truncated low-dimensional model, the Lorenz model has limited quantitative accuracy when the system is far from the linear stability region.
In fact, it is shown by \cite{curryOrderDisorderTwo1984} that
one needs to use more than 100 modes in a Fourier-Galerkin method
to get quantitative accuracy for the 2-dimensional RBC problem that is far from the linear region.
In this paper, we show that OnsagerNet is able to learn low dimensional
Lorenz-like autonomous ODE models with few modes, yet having high quantitative fidelity to the underlying RBC problem.
This validates, and improves upon, the basic approach of Lorenz to capture
complex flow phenomena using low dimensional nonlinear dynamics.
Furthermore, this demonstrates the effectiveness of a principled combination of physics and machine learning when dealing with data from scientific applications.

\section{From Onsager principle to OnsagerNet}

The Onsager principle \citep{onsagerReciprocalRelationsIrreversible1931,onsagerReciprocalRelationsIrreversible1931a} is a well-known model for the dynamics of dissipative systems near equilibrium. Given generalized coordinates $h = (h_1,\dots,h_m)$, their
dynamical evolution is modelled by
\begin{equation}\label{eq:onsager_var}
M \pt{h} = - \nabla V(h)
\end{equation}
where $V:\mathbb{R}^m \rightarrow \mathbb{R}$ is a potential function, typically with a thermodynamic interpretation such as free energy or negated entropy. The matrix $M$ models the energy dissipation of the system (or entropy production) and is positive semi-definite in the sense that $\ip{h}{M h} \ge 0$ for all $h$, owing to the second law of thermodynamics. An important result due to Onsager, known as the \emph{reciprocal relations}, shows that if the system possesses microscopic time reversibility, then $M$ is symmetric, i.e. $M_{ij}=M_{ji}$.

The dynamics \eqref{eq:onsager_var} is simple, yet it can model a large variety of systems close to equilibrium in the linear response regime (see e.g. \citep{degroot_nonequilibrium_1962,doi_onsager_2011}).  However, many dynamical systems in practice are far from equilibrium, possibly sustained by external forces (e.g. the flow induced transition in liquid crystals \citep{kroger_derivation_2007,yu_nonhomogeneous_2010}, the kinetic equation with large Knudsen number \cite{han_uniformly_2019}, etc), and in such cases~\eqref{eq:onsager_var} no longer suffices. Thus, it is an important task to generalize or extend~\eqref{eq:onsager_var} to handle these systems.

The extension of known but limited physical principles to more general domains have been a continual exercise in the development of theoretical physics. In fact,~\eqref{eq:onsager_var} itself can be viewed as a generalization of the Rayleigh's phenomenological principle of least dissipation for hydrodynamics \citep{rayleigh_instability_1878}.
While classical non-equilibrium statistical physics has been further developed in many aspects, including the study of transport phenomena via Green-Kubo relations~\citep{green_markoff_1954,kubo_statistical-mechanical_1957-1} and the relations between fluctuations and entropy production~\citep{evans_fluctuation_2002}, an extension of the dynamical model~\eqref{eq:onsager_var} to model general non-equilibrium systems remains elusive. Furthermore, whether a simple yet general extension with solid physical background exists at all is questionable.

This paper takes a rather different approach. Instead of the possibly arduous tasks of developing a general dynamical theory of non-equilibrium systems from first principles, we seek a \emph{data-driven} extension of~\eqref{eq:onsager_var}. In other words, given a dynamical process to be modelled, we posit that it satisfies some reasonable extension of the usual Onsager principle, and determine its precise form from data. In particular, we define the \emph{generalized Onsager principle}
\begin{equation}
\big(M (h) + W (h)\big) {\pt{h}} = - \nabla V (h) + g (h). \label{eq:GOPmform}
\end{equation}
where the matrix valued functions $M$ (resp. $W$) maps $h$ to symmetric positive semi-definite (resp. anti-symmetric) matrices. The last term $g:\mathbb{R}^{m} \rightarrow \mathbb{R}^{m}$ is a vector field that represents external forces to the otherwise closed system, and may interact with the system in a nonlinear way. The anti-symmetric term $W$ models the conservative part of the system, and together with $g$ they greatly extend the degree of applicability of the classical Onsager's principle.

We will assume $M (h) + W (h)$ is invertible everywhere, and hence $(M (h) + W
(h))^{- 1}$ can be written
as a sum of a symmetric positive semi-definite matrix, denoted by $\tilde{M}
(h)$ and
a skew-symmetric matrix, denoted by $\tilde{W} (h)$ (see Theorem
\ref{thm:psdmatrix} in Appendix \ref{apd:psdmatrix} for a proof), thus we have
an
equivalent form for equation \eqref{eq:GOPmform}:
\begin{equation}
\pt{h} = - (\tilde{M} (h) + \tilde{W} (h)) \nabla V (h) + {f} (h),
\label{eq:GOPdform}
\end{equation}
where $f = (M+W)^{-1} g$. We will now work with~\eqref{eq:GOPdform} as it is
more convenient.

We remark that the form of the generalized Onsager dynamics~\eqref{eq:GOPmform}
is not an arbitrary extension of~\eqref{eq:onsager_var}. In fact, the
dissipative-conservative decomposition ($M+W$) and dependence on $h$ are well
motivated from classical dynamics. To arrive at the former, we make the crucial
observation that a dynamical system defined by generalized Poisson brackets
\citep{beris_thermodynamics_1994} have precisely this decomposition  (See
Appendix \ref{apd:gdyn}).
Moreover, such forms also appeared in partial differential equation (PDE)
models for complex fluids {\citep{yang_hydrodynamic_2016,zhao_general_2018}}.
Note that general Poisson brackets are required to satisfy the Jacobi identity
(see e.g. \citep{beris_thermodynamics_1994,ottinger_equilibrium_2005}), but in
our approach we do not enforce such a condition. We refer to
\cite{hernandez_structurepreserving_2021} for a neural network implementation
based on the generalized Poisson brackets.

\subsection{Generalization of Onsager principle for model
    reduction}
\label{apd:GOPasMR}
Here, we show that the
generalized Onsager principle \eqref{eq:GOPmform} or its equivalent form
\eqref{eq:GOPdform}
is invariant under coordinate transformations and is a suitable structured form
for model reduction.
In the original Onsager
principle \cite{onsagerReciprocalRelationsIrreversible1931,
    onsagerReciprocalRelationsIrreversible1931a}, the system is
assumed to be not far away from the equilibrium, such that the system
dissipation takes a quadratic form: $\| \pt{h} \|_M^2$, i.e. the
matrix $M$ is assumed to be constant. In our generalization, we
assume $M$ is a general function that depends $h$. Here we give some
arguments why this is necessary and how it can be obtained if
the underlying dynamic is given from the viewpoint of
model reduction.

To explain nonlinear extension is necessary, we assume that the
underlying high-dimensional dynamics which produces the observed trajectories
satisfies the form of the generalized Onsager principle with constant $M$ and
$W$.
The dynamics described by an underlying PDE after spatial
semi-discretization takes the form
\begin{equation}
\left(M + W\right) \pt{u} = - \nabla_u V + g, \quad u \in \mathbb{R}^N
\label{eq:PDEsemdis},
\end{equation}
where $M$ is a symmetric positive semi-definite \emph{constant} matrix, $W$
is
an anti-symmetric matrix. \ For the dynamics to make sense, we need $M + W$ to
be invertible. $\nabla_u V$ is a column vector of length $N$. By taking
inner product of {\eqref{eq:PDEsemdis}} with $\pt{u} $, we have an
energy dissipation relation of form
\[ \pt{V} = - \ip{\pt{u}} {M \pt{u}}  + \ip{g}{\pt{u}}. \]
Now, suppose that $u$ has a solution in a low-dimensional manifold that could
be well described by hidden variables $h (t) \in \mathbb{R}^m$ with $m \ll
N$.
Denote the low-dimensional solution as $u = u (h (t)) + \varepsilon$, and
plug
it into {\eqref{eq:PDEsemdis}}, we obtain
\[ (M + W) \nabla_h u \,\pt{h} = - \nabla_u V (u (h)) + g + O(\varepsilon) .
\]
Multiplying $J^T:=(\nabla_h u)^T$ on both sides of the above equation, we
have
\[ J^T (M + W) J\,\pt{h} = J^T [-\nabla_u V
(u (h)) + g] + O (\varepsilon) . \]
So, we obtain the following
ODE
system with model reduction error $O (\varepsilon)$:
\begin{equation}
(\bar{M} + \bar{W}) \pt{h} = - \nabla_h V + \bar{g}, \label{eq:ModelRed}
\end{equation}
where
\begin{equation}
\bar{M} = J^T M J, \quad \bar{W} = J^T W J, \quad \bar{g} = J^T g.
\label{eq:ModelRedCoefMats}
\end{equation}
Note that as long as $\nabla_h u$ has a full column rank,
$(\bar{M} + \bar{W})$ is invertible, so the ODE system makes sense. Now
$\bar{M}$, $\bar{W}$ and $\bar{g}$ depend on $h$ in general if
$\nabla_h u$ is not a constant matrix.
Moreover, if the solutions considered all exist
exactly in a low-dimensional manifold, i.e. $\varepsilon=0$ in the above
derivation,
then \eqref{eq:ModelRed} is exact, which means the generalized Onsager
principle is  invariant to non-singular coordinate transformations.

\begin{remark}
    For underlying dynamics given in the alternative form
    \begin{equation}
    \pt{u}  = - (M + W) \nabla_u V + f, \quad u \in \mathbb{R}^N
    \label{eq:PDEsemdis2} .
    \end{equation}
    We first rewrite it into form {\eqref{eq:PDEsemdis}} as
    \[ (M + W)^{- 1} \pt{u} = - \nabla_u V + (M + W)^{- 1} f. \] Then,
    use same procedure as before to obtain
    \[ J^T (M + W)^{- 1} J \, \pt{h} =
    J^T [-\nabla_u V (u (h)) + (M + W)^{- 1} f]
    + O (\varepsilon), \]
    from which, we obtains a reduced model
    with error $O (\varepsilon)$:
    \begin{equation}
    \pt{h} = - (\tilde{M} + \tilde{W}) \nabla_h V + \tilde{f},
    \label{eq:ModelRed2}
    \end{equation}
    where $\tilde{M} = (G + G^T) / 2$, $\tilde{W} = (G - G^T) / 2$,
    $\tilde{f} = G J^T (M + W)^{- 1} f$,
    and $G = [J^T (M + W)^{- 1} J]^{- 1}$.
\end{remark}

\begin{remark}
    When $M, W, g$ in {\eqref{eq:PDEsemdis}} are constant, if
    linear PCA is used for model reduction, in which $\nabla_h u$ is a
    constant matrix, then we have $\bar{M}, \bar{W}, \bar{g}$ are
    constants. However, if we consider the incompressible Navier-Stokes
    equations written
    in form {\eqref{eq:PDEsemdis}}, then $M$ and $W$ are not constant
    matrices.
    We obtain nonlinear coefficients in both formulations for the model
    reduction problem of incompressible Naiver-Stokes equations.
\end{remark}

Note that the presence of state ($h$) dependence on all the terms implies that we are not linearizing the system about some equilibrium state, as is usually done in linear response theory. Consequently, the functions $W,M,g$ and $V$ may be complicated functions of $h$, and we will learn them from data by parameterizing them as suitably designed neural networks. In this way, we preserve the physical intuition of non-equilibrium physics (dissipation term $M$, conservative term $W$, potential term $V$ and external fields $g$), yet exploit the flexibility of function approximation using data and learning.

In summary, one can view~\eqref{eq:GOPmform} and \eqref{eq:GOPdform} both as an extension of the classical Onsager  principle to handle systems far from equilibrium, or as a reduction of a high dimensional dynamical system defined by generalized Poisson brackets. Both of these dynamics are highly general in their respective domains of application, and serve as solid foundations on which we build our data-driven methodology.

\subsection{Dissipative structure and temporal stability of OnsagerNet}

From the mathematical point of view, modeling dynamics
using~\eqref{eq:GOPmform} or~\eqref{eq:GOPdform} also has clear advantages, in
that the learned system is well-behaved as it evolves through time, unlike
unstructured approaches such as dynamic mode decomposition (DMD) (see e.g.
\cite{schmid_dynamic_2010, takeishi_learning_2017}) and direct parameterization
by neural networks, which may achieve a short time trajectory-wise accuracy,
but cannot assure mid-to-long time stability as the learned system evolves in
time. In our case, we can show in the following result that under mild
conditions, the dynamics described by~\eqref{eq:GOPmform}
or~\eqref{eq:GOPdform} automatically ensures a dissipative structure, and
remains stable as the system evolves in time.

\begin{theorem}\label{thm:noblowup}
    The solutions to the system {\eqref{eq:GOPdform}} satisfy an energy evolution law
	\begin{equation}
	\pt{V}(h)  = - \big\| \nabla V(h) \big\|_{\tilde{M}}^2
	 + \ip{f(h)}{\nabla V(h)}. \label{eq:GOPdissipation2}
	\end{equation}
	If we assume further that there exist positive constants $\alpha, \beta$ and non-negative constants $c_0, c_1$ such that $\ip{h}{\tilde{M} h} \geqslant \alpha \|h\|^2$ (uniformly positive dissipation), $V (h) \geqslant \beta \| h \|^2$ (coercive potential) and
	$\|f(h) \|\leqslant c_0+ c_1 \|h\|$ (external force has linear growth). Then, $\|h(t)\|,V(h(t)) < \infty$ for all $t$. In particular, if there is no external force, then $V(h(t))$ is non-increasing in $t$.
\end{theorem}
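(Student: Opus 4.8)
The plan is to derive the energy evolution law \eqref{eq:GOPdissipation2} by differentiating $V$ along a trajectory, and then to feed that law into a Grönwall-type differential inequality using the three structural hypotheses. First I would apply the chain rule, $\pt{V} = \ip{\nabla V(h)}{\pt{h}}$, and substitute the dynamics \eqref{eq:GOPdform}. This produces three terms: $-\ip{\nabla V}{\tilde{M}\nabla V}$, $-\ip{\nabla V}{\tilde{W}\nabla V}$, and $\ip{\nabla V}{f}$. The key simplification is that the conservative contribution vanishes identically: since $\tilde{W}(h)$ is skew-symmetric, $\ip{x}{\tilde{W}x} = 0$ for every vector $x$. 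Recognizing $\ip{\nabla V}{\tilde{M}\nabla V} = \|\nabla V\|_{\tilde{M}}^2$ then yields \eqref{eq:GOPdissipation2} directly. The last assertion of the theorem is then immediate: with $f \equiv 0$ we get $\pt{V} = -\|\nabla V\|_{\tilde{M}}^2 \le 0$ by positive semi-definiteness of $\tilde{M}$, so $V(h(t))$ is non-increasing.

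For the boundedness claim I would estimate the right-hand side of the energy law. Applying the uniform dissipation bound to the vector $\nabla V$ gives $\|\nabla V\|_{\tilde{M}}^2 \ge \alpha\|\nabla V\|^2$, while Cauchy--Schwarz together with the linear growth hypothesis controls the forcing term by $\ip{f}{\nabla V} \le (c_0 + c_1\|h\|)\|\nabla V\|$. A Young's inequality split with weight tuned to $\alpha$ then absorbs half of the dissipation against the cross term, giving $\pt{V} \le -\tfrac{\alpha}{2}\|\nabla V\|^2 + \tfrac{1}{2\alpha}(c_0 + c_1\|h\|)^2$. Discarding the non-positive gradient term and invoking coercivity in the form $\|h\|^2 \le V/\beta$ converts this into a closed scalar inequality $\pt{V} \le a\,V + b$, with $a = c_1^2/(\alpha\beta)$ and $b = c_0^2/\alpha$. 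Grönwall's inequality then bounds $V(h(t))$ by a quantity finite on every finite time interval, and coercivity transfers this to a bound on $\|h(t)\|$.

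The main obstacle is not any isolated step but the calibration of the Young's inequality split so that the dissipation exactly dominates the cross term while the residual estimate closes purely in terms of $V$; the coercivity of $V$ and the linear growth of $f$ are precisely what make the inequality self-contained in $V$. It is the \emph{linear} (rather than superlinear) growth of $f$ that is essential here: it keeps the Grönwall constant $a$ finite, so the bound grows at worst exponentially and remains finite for every $t$, ruling out finite-time blow-up. A minor but necessary formality is to first invoke local existence and uniqueness from standard ODE theory, and then use the a priori bound to continue the solution globally.
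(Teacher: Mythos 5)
Your proposal is correct and follows essentially the same route as the paper: differentiate $V$ along the flow, kill the skew-symmetric term, then combine Young's inequality, the uniform dissipation bound applied to $\nabla V$, and coercivity to close a Gr\"onwall inequality in $V$ alone. The only differences are cosmetic (you retain half the dissipation term before discarding it, which merely changes the constants in $\pt{V}\le aV+b$, and you add the standard local-existence/continuation remark, which the paper leaves implicit).
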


  \begin{proof}
  	Equation \eqref{eq:GOPdissipation2} can be obtained by pairing
  	both sides of equation \eqref{eq:GOPdform} with $\nabla V$.
  	We now prove boundedness.
    Using Young's inequality, we have for any $\varepsilon$,
	\begin{equation*}
	\begin{aligned}
	\ip{f(h)}{\nabla V(h)} & \le {\varepsilon}{\| \nabla V(h) \|^2}
	+ \frac{1}{4\varepsilon}\| f(h) \|^2 \\
	&\le {\varepsilon}{\| \nabla V(h) \|^2}
	+ \frac{1}{2\varepsilon}\big(c_0^2 + c_1^2 \|h\|^2\big)
	\end{aligned}
	\end{equation*}
	Putting the above estimate with $\varepsilon=\alpha$ back to \eqref{eq:GOPdissipation2}, to get
	\begin{eqnarray*}
    \pt{V}(h)  & =  & - \| \nabla V(h) \|_{\tilde{M}}^2 + \alpha \| \nabla V(h) \|^2
    + \frac{1}{2 \alpha} (c_0^2 + c_1^2\| h \|^2) \\
		& \leqslant & \frac{c_1^2}{2 \alpha \beta} V (h) + \frac{c_0^2}{2 \alpha}
	\end{eqnarray*}
	By Gr\"{o}nwall inequality, we obtain
	\[
	    V (h) \le
	    \begin{cases}
	    e^{\frac{c_1^2}{2 \alpha \beta} t} V_0
	    + (e^{\frac{c_1^2}{2 \alpha \beta} t} -1 ) \frac{c_0 ^2\beta }{c_1^2}, & c_1 > 0, \\
	    V_0 + \frac{c_0 ^2 }{2 \alpha} t,  & c_1 = 0,
	    \end{cases}
	\]
	where $V_0 = V(h(0))$.
	Finally, by the assumption $\| h \|^2 \leqslant \frac{1}{\beta} V$, $h$ is
	bounded. Note that when $f(h)\equiv 0$, we have $c_0=c_1=0$, so the above inequality is reduced to $V(h) \le V_0$, which can be obtained directly from \eqref{eq:GOPdissipation2}
	 without the requirement of $\alpha>0, \beta>0$.
\end{proof}

Theorem~\ref{thm:noblowup} shows that the dynamics is physical under the assumed conditions and we will design our neural network parameterization so that these conditions are satisfied.

\section{The OnsagerNet architecture and learning algorithm}

In this section, we introduce the detailed network architecture for the parameterization of the generalized Onsager dynamics, and discuss the details of the training algorithm.

\subsection{Network architecture}

We implement the generalized Onsager principle based on equation
{\eqref{eq:GOPdform}}. \ $\tilde{M} (h)$, $\tilde{W} (h)$, $V (h)$ and
$f(h)$ are represented by neural networks with shared hidden layers and are combined
according to {\eqref{eq:GOPdform}}. The resulting composite
network is named \emph{OnsagerNet}. Accounting for (anti-)symmetry, the numbers of independent variables in
$\tilde{M} (h)$, $\tilde{W} (h), V (h), f (h)$ are
$(m + 1) m / 2, (m - 1) m / 2$, $1$, and $m$, respectively.

One important fact is that $V (h)$, as an energy function, should be lower
bounded.  To ensure this automatically, one may define
$V(h) = \frac{1}{2} U(h)^2 + C$, where $C$ is some constant that is smaller than or equal to
$V$'s lower bound. Since the constant $C$ does not affect the
dynamics, we drop it in numerical implementation.  The actual form we
take is
\begin{equation}
  V (h) = \frac{1}{2} \sum_{i = 1}^m \Big(U_i(h)+ \sum_{j=1}^m\gamma_{ij} h_j \Big)^2 + \beta \| h \|^2,
  \label{eq:PotentialNet}
\end{equation}
where $\beta \geqslant 0$ is a positive hyper-parameter as assumed in Theorem
\ref{thm:noblowup} for forced systems. $U_i$ has a similar structure as one component of
$\tilde{W}(h)$.  We use $m$ terms in the form \eqref{eq:PotentialNet}
to ensure that an original quadratic energy after a dimension
reduction can be handled easily.
To see this, suppose the potential function for the high-dimensional problem (with coordinates $u$) is defined as $V(u)=u^T {A u}$ with $A$ symmetric positive definite. Further, let a linear PCA  $u \approx u_0 + J h$ be used for dimensionality reduction.
Then, $V(h)\approx (u_0+J h)^T A(u_0 + J h)=
\|u_0\|^2 + 2 u_0^TAJ h + h^T J^TAJ h =
\| v_0+G  h \|^2+ \text{constant}$, where $G^T G = J^TAJ, v_0^T = u_0^T AJG^{-1} $.
Hence, with $(\gamma_{ij})$ representing the matrix $G$,
a constant $U_i$ suffices to fully represent quadratic potentials.
The autograd mechanism implemented in
PyTorch {\citep{paszke_pytorch_2019}} is used to calculate $\nabla V(h)$.

  To ensure the positive semi-definite property of $\tilde{M} (h)$, we
  let $\tilde{M} (h) = L (h) L (h)^T + \alpha I$, where $L (h)$
  is a lower triangular matrix, $I$ is the identity matrix,
  $\alpha \geqslant 0$. Note that the degree of freedom of $L (h)$ and
  $\tilde{W} (h)$ can be combined into one $m \times m$ matrix,
  whose upper triangular part $R(h)$ determines
  $\tilde{W} (h) = R(h) - R(h)^T$.
  A standard multi-layer perception neural
  network with residual network structure (ResNet) \citep{he2016deep} is used to generate adaptive bases, which takes $(h_1, \ldots, h_m)$ as input, and outputs
  $\{ L (h), R (h), U_i(h) \}$ as linear combinations of those bases.
  The external force ${f}_i (h)$ are parameterized based on \emph{a priori} information, and should be of limited capacity so as not to dilute
  the physical structures imposed by the other terms.
  For forced systems considered in this paper,
  we typically take $f_i$ as affine functions of $h$.
  The final output
  of the OnsagerNet is given by
\begin{equation}
  \pt{h_i} = \sum_{k=1}^m \left(L (h)^{} L (h)^T_{} + \alpha I + \tilde{W}
    (h)\right)_{i, k} \big(- \partial_{h_k} V (h)\big) + {f}_i (h),
    \quad i=1,\ldots, m, \label{eq:OnsagerNet}
\end{equation}
where $V (h)$ is defined by {\eqref{eq:PotentialNet}}. Note that in an unforced system we have
$\alpha = \beta = 0$, {since they are only
  introduced in forced system to ensure a stability of the learned system}
as required by Theorem \ref{thm:noblowup}. The computation procedure of OnsagerNet is described in Architecture
\ref{alg:GOPNet}. The full architecture is summarized in
Fig.~\ref{fig:OnsagerNet}.

\begin{algorithm}[H]
	\caption{OnsagerNet$(\alpha, \beta, l, n_l; h)$}
	\label{alg:GOPNet}
	\hspace*{0.02in} {\bf Input:}
    $h\in \bbR^m$, parameters $\alpha\ge 0, \beta \ge 0$, activation function $\sigma_A$,
    number of hidden layer $l$ and number of nodes in each hidden layer: $n_l$\\
	\hspace*{0.02in} {\bf Output:}
    $\operatorname{OnsagerNet}(\alpha, \beta, l, n_l; h) \in \bbR^m$
	\begin{algorithmic}[1]
      \State Calculate the shared sub-net output using a $l$-layer
      neural network $\phi = \operatorname{MLP}(h)\in \bbR^{n_l}$. Here
      $\operatorname{MLP}$ has $l-1$ hidden layer and one output layer,
      each layer has $n_l$ nodes. Activation function $\sigma_A$ is
      applied for all hidden layers and output layer.  If $l>1$, ResNet
      shortcuts are used.

      \State Evaluate $U_i$ using a linear layer as
      $U_i = \sum_{j} \omega^{(1)}_{ij} \phi_j + b^{(1)}_{i}$, calculate $V$
      according to \eqref{eq:PotentialNet}

      \State Use the autograd mechanism of $\operatorname{PyTorch}$ to
      calculate the gradient $\nabla_{h_k} V(h),\ k=1,\ldots, m.$

      \State Evaluate $A\in \bbR^{m^2}$ as
      $A_i = \sum_{j} \omega^{(2)}_{ij} \phi_j + b^{(2)}_{i}.$

      \State Reshape $A$ as a $m\times m$ matrix, take its
      lower-triangular part including the main diagonal as $L$, the
      upper-triangular part without the main diagonal as $R$ to form
      $\tilde{W}=R-R^T$.

      \If {the system is forced}
      \State calculate the external force/control ${f}_i$ using a priori form.
      \Else
      \State take ${f}_i=0.$
      \EndIf

      \State Calculate the output of OnsagerNet using \eqref{eq:OnsagerNet}
      \State \Return $\operatorname{OnsagerNet}(\alpha, \beta, l, n_l; h)$.
	\end{algorithmic}
\end{algorithm}
\begin{figure}[ht]
    \centering
    \includegraphics[width=0.4\linewidth, ]{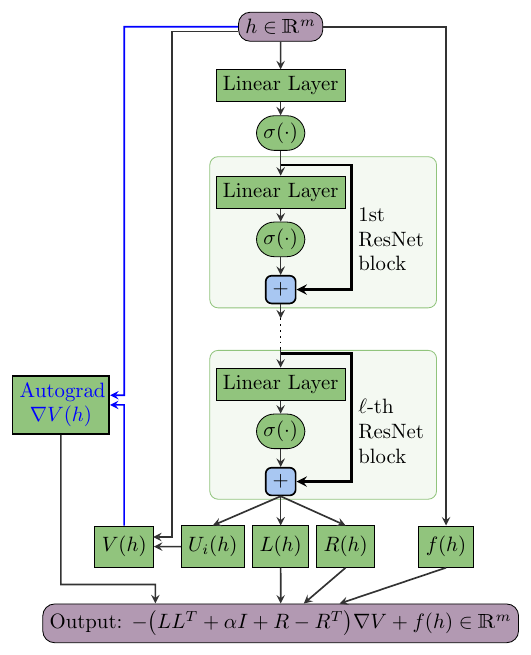}
    \includegraphics[width=0.4\linewidth, ]{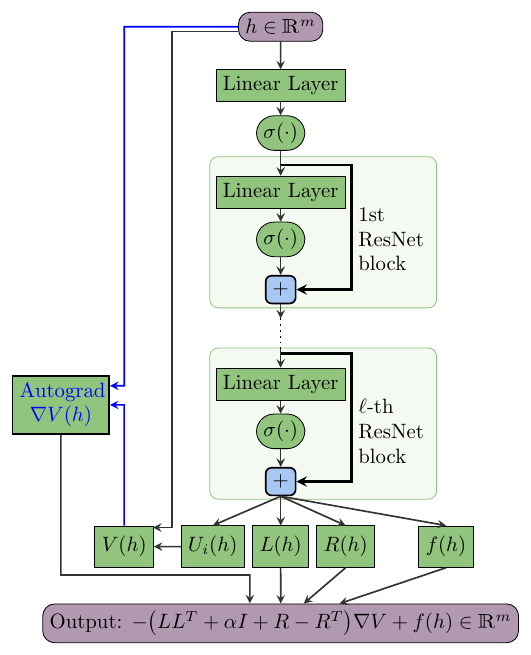}
    \caption{\label{fig:OnsagerNet}  The computational graph of OnsagerNet.
    Left: affine $f(h)$, Right: nonlinear $f(h)$.}
\end{figure}

\subsection{Training objective and algorithm}
To learn ODE model represented by OnsagerNet based on sampled ODE trajectory data, we minimize following loss function
\begin{equation} \label{eq:ode_loss}
\mathcal{L}_{\tmop{ODE}} =
\frac{1}{| S |} \sum_{ (h(t), h(t+\tau)) \in S}
\frac{1}{\tau^2}
\big\| h (t + \tau) - \tmop{RK}2 (\tmop{OnsagerNet}; h(t), \tau/n_s, n_s)
\big\|^2.
\end{equation}
Here
$\tau$ is the time interval of sample data.
${S}$ is the sample set.
$\tmop{RK2}$ stands for a second order Runge-Kutta method (Heun method).
$n_s$ is the number of $\tmop{RK2}$ steps used to forward
the solution of OnsagerNet from snapshots at $t$ to $t+\tau$.
Other Runge-Kutta method can be used. For simplicity, we only present results using
Heun method in this paper.
This Runge-Kutta embedding method has several advantages over the linear multi-step methods
\citep{raissi_multistep_2018,xieNonintrusiveInferenceReduced2018}, e.g. the variable time interval case and long time interval case can be easily handled by Runge-Kutta methods.

With the definition of the loss function and model architecture, we can then use standard stochastic gradient algorithms to train the network. Here we will use the Adam optimizer \citep{kingmaAdamMethodStochastic2015,reddi_convergence_2018} to minimize
the loss function with a learning rate scheduler that halves the learning rate if the
loss is not decreasing in certain numbers of iterations.

\subsection{Reduced order model via embedding}

The previous section describes the situation when no dimensionality reduction is sought or required, and an Onsager dynamics is learned directly on the original trajectory coordinates. On the other hand, if the data are generated from a numerical discretization of some PDE or a large ODE system, and we
want to learn a small reduced order model, then a dimensionality reduction
procedure is needed. One can either use linear principal component analysis (PCA)
or nonlinear embedding, e.g. the autoencoder, to find a set of good latent coordinates from the high dimensional data.
When PCA is used, we perform PCA and OnsagerNet training separately.
When an autoencoder is used, we can train it either separately or together
with the OnsagerNet.
The end-to-end training loss function is taken as
\begin{align}
\mathcal{L}_{\tmop{tot}} &= \mathcal{L}_{\tmop{AE}} + \mathcal{L}_{\tmop{ODE}} +
\mathcal{L}_{\tmop{reg}} \nonumber \\
&=
\frac{1}{| S |} \sum_{ (u(t), u(t+\tau)) \in S}
\Big\{
\beta_{ae}\| u(t) - \psi\circ \varphi \circ u(t) \|^2
+ \beta_{ae}\| u(t+\tau) - \psi\circ \varphi \circ u(t+\tau) \|^2
\nonumber  \\
& \qquad\qquad
+ \frac{1}{\tau^2}
\big\| \varphi \circ u (t + \tau) - \tmop{RK}2 (\tmop{OnsagerNet};
\varphi\circ u(t), \tau/n_s, n_s) \big\|^2 \nonumber  \\
& \qquad\qquad
+ \beta_{isom}
\Big(
\big| \| u(t+\tau) - u(t) \|^2
- \| \varphi \circ u(t+\tau) - \varphi\circ u(t) \|^2
 \big| - \alpha_{isom} \Big)_+
\Big\}, \label{eq:e2e_loss}
\end{align}
where $\varphi$, $\psi$ stands for the
encoder function and decoder function of the autoencoder, respectively. In the last term,
$\left| \| u(t+\tau) - u(t) \|^2
- \| \varphi \circ u(t+\tau) - \varphi\circ u(t) \|^2 \right|$
is an estimate of the isometric loss (i.e. deviation from $\varphi$ being an isometry) of the encoder function based on trajectory data, with
$\alpha_{isom}$ being a constant smaller than the
average isometric loss of the PCA dimension reduction.
Here, $(\cdot)_+$ stands for positive part and $\beta_{isom}$ is a penalty constant.
$\beta_{ae}$ is a parameter to balance the autoencoder accuracy and OnsagerNet fitting accuracy.

{
The choice of autoencoder architecture follows from our observation that PCA
performed respectably on a number of examples we studied.
Thus, we build the autoencoder by extending the basic PCA.
The resulting architecture, which we call PCA-ResNet,
is a stacked architecture with each layer
consisting of a fully connected autoencoder block with
a PCA-type short-cut connection:
\begin{equation}
    h^{k+1} = \text{PCA}_{n_k, n_{k+1}} (h^k) + W_2^k \sigma (W^k_1 h^k +
    b^k),
    \quad k=0,\ldots, L-1,
\end{equation}
where $n_{k+1}<n_{k}$ and $\text{PCA}_{n_k, n_{k+1}}$ is a PCA transform
from $n_k$ dimension to $n_{k+1}$ dimension. $\sigma$ is a smooth activation
function. The parameters $W_2^k$, $W_1^k$
$b^k$ in the encoder are initialized close to zero, such that the encoder becomes
a small perturbation of the PCA. On can regard such an autoencoder as a nonlinear
extension of PCA. The decoder is designed similarly.
We note that there was a similar but different autoencoder proposed to find
slow variables \cite{linot_deep_2020}.

}

\section{Applications}
\label{sec:applications}

In this section, we present various applications of the
OnsagerNet approach. We will start with benchmark problems
and then proceed to investigate more challenging settings
such as the Rayleigh-B\'{e}nard convection (RBC) problem.

\subsection{Benchmark problem 1: deterministic damped Langevin equation}

Here, we use the OnsagerNet to learn a deterministic damped
Langevin equation
\begin{equation}
  \pt{x} = v, \qquad \pt{v} = - \frac{\gamma}{m} \pt{x} - \frac{1}{m} \nabla_x U (x).
  \label{eq:Langevin}
\end{equation}
The friction coefficient $\gamma$ may be a constant or a
parameter that depends on $v$. For the potential $U (x)$, we
consider two cases:
\begin{itemize}
\item Hookean spring
  \begin{equation}
    U (x) = \frac{\kappa}{2} x^2 . \label{eq:Hookean}
  \end{equation}
  \item Pendulum model
  \begin{equation}
    U (x) = \frac{4 \kappa}{\pi^2} \left( 1 - \cos \left( \frac{\pi x}{2} \right)
    \right) . \label{eq:pendulum}
  \end{equation}
\end{itemize}
Note that no dimensionality reduction is required here and the
coordinate $h=(x, v)$ entails the full phase space. The goal
of this toy example is to quantify the accuracy and
stability of the OnsagerNet, as well as to highlight the
interpretability of the learned dynamics.

We normalize the parameter $\gamma, \kappa$ by $m$, i.e. we
take $m = 1$. To generate data, we simulate the systems using
a third order strong stability preserving Runge-Kutta method
{\citep{shu_efficient_1988}} for a fixed period of time with
initial conditions $\{ x_0, v_0 \}$ sampled from
$\Omega_S = [- 1, 1]^2$. Then, we use OnsagerNet
{\eqref{eq:OnsagerNet}} to learn an ODE system by fitting
the simulated data.

In particular, we will demonstrate that the energy $V$
learned has physical meaning.  Note that the energy function
in the generalized Onsager principle need not be unique. For
example, for the heat equation $\pt{u} = \Delta u$, both
$\frac{1}{2} \| u \|^2$ and $\frac{1}{2} \| \nabla u \|^2$
can serve as an energy functional governing the dynamics,
with diffusion operators ($M$ matrix) being $- \Delta$ and
the identity respectively. The linear Hookean model is
similar. Let
$V (x, v) = \frac{1}{2} \kappa x^2 + \frac{1}{2} v^2$, then
\begin{equation}
  \left(\begin{array}{c}
          \pt{x}\\
          \pt{v}
        \end{array}\right) = \left(\begin{array}{cc}
                               0 & 1\\
                               - \kappa  & - \gamma
  \end{array}\right) \left(\begin{array}{c}
                             x\\
                             v
                           \end{array}\right) = - \left(\begin{array}{cc}
                                                          0 & - 1\\
                                                          1 & \gamma
                                                        \end{array}\right) \nabla V (x, v) . \label{eq:linLangevin}
\end{equation}
The eigenvalue of the matrix $A = \left(\begin{array}{cc}
                                          0 & 1\\
                                          - 1 & - \gamma
\end{array}\right)$ is $\lambda_{1, 2} = - \frac{\gamma}{2} \pm \frac{1}{2}
\sqrt{\gamma^2 - 4}$. \ When $\gamma \geqslant 2$, we always
obtain real negative eigenvalues, and the system is
over-damped. From equation {\eqref{eq:linLangevin}}, we may
define another energy
$\tilde{V} (x, v) = \frac{1}{2} x^2 + \frac{1}{2} v^2$ with
dissipative matrix and conservative matrix
$- \frac{1}{2} (A + A^T)$, $\frac{1}{2} (A - A^T)$
respectively.  For this system,
$\hat{V} (x, v) : = \tilde{V} (F (x, v))$ with any
non-singular linear transform $F$ could serve as energy, and the
corresponding dynamics is
\[ \left(\begin{array}{c}
           \pt{x}\\
           \pt{v}
   \end{array}\right) = A (F^T F)^{- 1} \nabla \hat{V} . \]
Hence, we use this transformation to align the learned
energy before making comparison with the exact energy
function.


Let us now present the numerical results. We test two cases:
1) Hookean model with $k = 4$, $\gamma = 3$; 2) Pendulum
model with $k = 4$ and $\gamma (v) = 3| v |^2$.  To generate
sample data, we simulate the ODE systems to obtain 100
trajectories with uniform random initial conditions
$(x, v) \in [- 1, 1]^2$. For each trajectory, 100 pairs of
snapshots at $(i\, T/100, i\, T/100 + 0.001)$,
$i = 0, \ldots, 99$ are used as sample data. Here $T=5$ is the
chosen time period of sampled trajectories.
Snapshots from the first 80 trajectories are taken as the
training set, while the remaining snapshots are taken as the
test set.

We test three different methods for learning dynamics:
OnsagerNet, a symplectic dissipative ODE net (SymODEN
\citep{zhongSymplecticODENetLearning2020}), and a simple
multi-layer perception ODE network (MLP-ODEN) with ResNet
structure.  To make the numbers of trainable parameters in
three ODE nets comparable, we choose $l=1$ and $n_l=12$ for
OnsagerNet, $n_l =17$ for SymODEN, and MLP-ODEN with 2
hidden layers and each layer has $9$ hidden units, such that
the total numbers of tunable parameters in OnsagerNet,
SymODEN and MLP-ODEN are 120, 137, and 124, correspondingly.

To test the robustness of those networks paired with
different activation functions, five $C^1$ activation
functions are tested, including ReQU, ReQUr, softplus,
sigmoid and $\tanh$. Here, ReQU, defined as
$(x) \assign x^2$ if $x \geqslant 0$, otherwise 0, is the
rectified quadratic unit studied in
{\citep{liBetterApproximationsHigh2020}}.  Since
$\tmop{ReQU}$ is not uniformly Lipschitz, we introduce ReQUr
as a regularization of ReQU, defined as
$\tmop{ReQUr} (x) : = \tmop{ReQU} (x) - \tmop{ReQU} (x -
0.5)$.

The networks are trained using a new version of Adam
\citep{reddi_convergence_2018} with mini-batches of size 200
and initial learning rate $0.0256$.
The learning rate is halved if the loss is not decreasing in 25 epochs.
The
default number of iterations is set to 600 epochs.

For the Hookean case, the mean squared error (MSE) loss on
testing set can be reduced to about $10^{-5}\sim 10^{-8}$
for all the three ODE nets, depending on different random
seeds and activation functions used. For the nonlinear
pendulum case, the MSE loss on testing set can be reduced
to about $10^{-4} \sim 10^{-5}$ for OnsagerNet and
$10^{-3} \sim 10^{-5}$ for MLP-ODEN, but only $10^{-2}$ for
SymODEN, see Fig \ref{fig:cmp_ODENets}.  The reason for the
low accuracy of SymODEN is that in the SymODEN, the
diffusion matrix is assumed to be a function of general
coordinate $x$ only
\citep{zhongSymplecticODENetLearning2020}, but here in the
damped pendulum problem, the diffusion term depends on $v$.
From the test results presented in Figure
\ref{fig:cmp_ODENets}(a), we see that the results of OnsagerNet
are not sensitive to the nonlinear activation functions
used.
Moreover, \ref{fig:cmp_ODENets}(b) shows that OnsagerNet
has much better long time prediction accuracy.
Since the nonlinearities in many practical dynamical systems
are of polynomial type, we will mainly use ReQU
and ReQUr as activation functions for other numerical
experiments in this paper.

\begin{figure}[!ht]
  \centering
  \subfigure[Testing MSE accuracy: the height of the bars stands for
      $-\log_{10}$ of the testing MSE.
      The results are averages from training with three different random seeds.
      The heights of the red crosses on top of the bars indicate standard deviations.]
    {
      \includegraphics[width=0.45\linewidth]{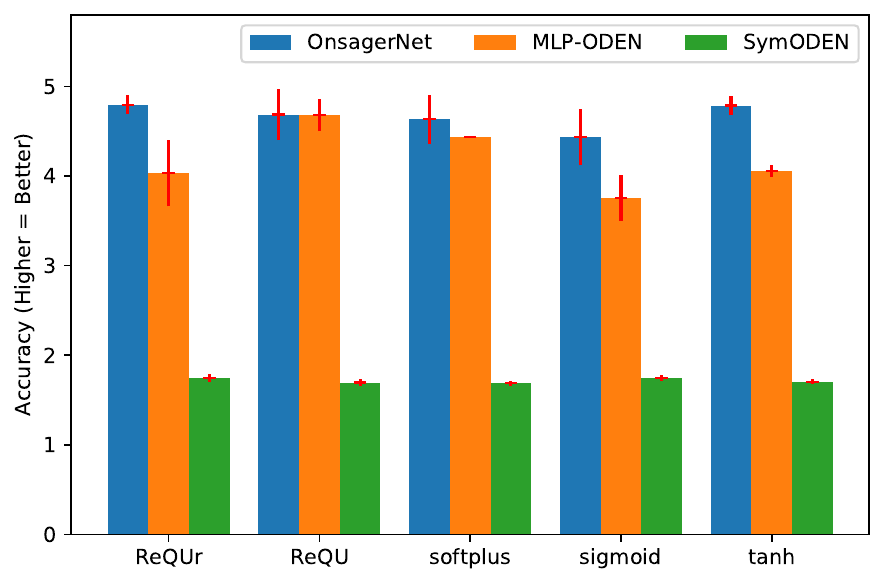}
    }
  \subfigure[
    Relative error of predictions versus time for three ODE nerual networks with the
    ReQUr activation function.
  ]{
     \includegraphics[width=0.45\linewidth]{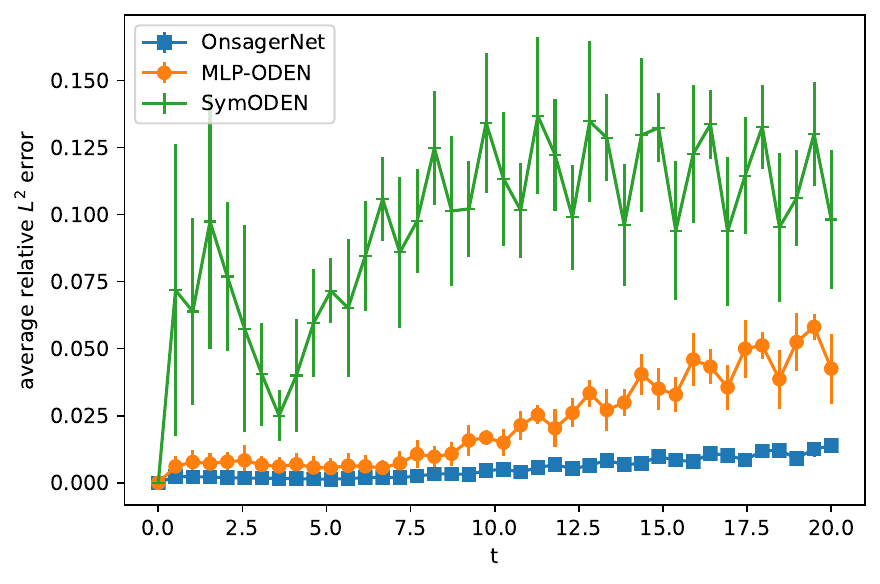}
  }
  \caption{\label{fig:cmp_ODENets} The accuracy of learned
    dynamics by using three different ODE neural networks
    and five different activation functions for the
    nonlinear damped pendulum problem with $\kappa=4$,
    $\gamma = 3 v^2 $.   }
\end{figure}

In Fig \ref{fig:LangevinPot} we plot the contours of learned
energy functions using OnsagerNet and compare with the exact
ground truth total energy function $U(x)+v^2/2$.
We observe a clear correspondence up to rotation and scaling for the linear
Hookean model case and a scaling of the nonlinear pendulum
case. In all cases, the minimum $(x = 0, v = 0)$ is
accurately captured.
Note that we used a linear transform to align the learned free energy.
After the alignment, the relative $L^2$-norm error between the
learned and physical energy for the two tested cases are
$6.3 \times 10^{- 3}$ and $8.6 \times 10^{-2}$
respectively. To verify that the OnsagerNet approach is able to
learn non-quadratic potentials, we also test an example with exact
double-well type potential $U(x)=(x^2-0.5)^2$ and $\gamma=3$. The relative
$L^2$-norm error between the learned and exact energy is $7.5\times 10^{-2}$,
where a simple $\min$-$\max$ rescaling is used before calculating the numerical
error for this example.

\begin{figure}[ht]
  \centering
  \subfigure[ReQUr OnsagerNet results for Hookean model with  $k = 4, \gamma =
  3$.]{
    \includegraphics[width=0.7\linewidth]{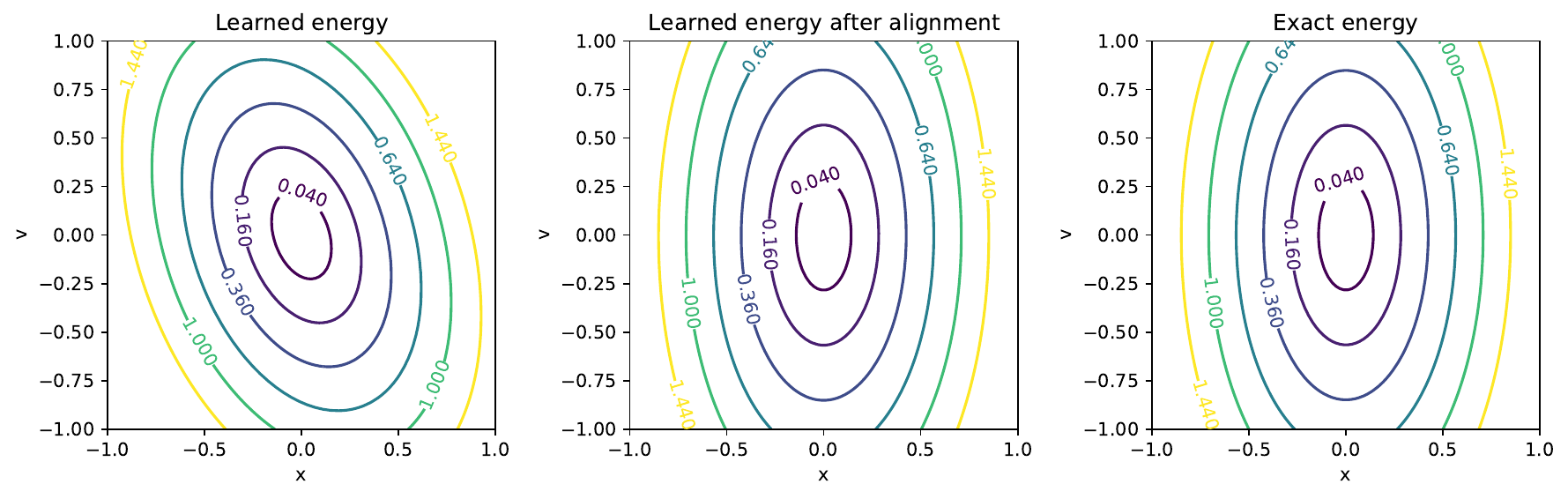}
  }
  \subfigure[ReQU OnsagerNet results for pendulum model with $k = 4$, $\gamma =
  3|v|^2$.]{
    \includegraphics[width=0.7\linewidth]{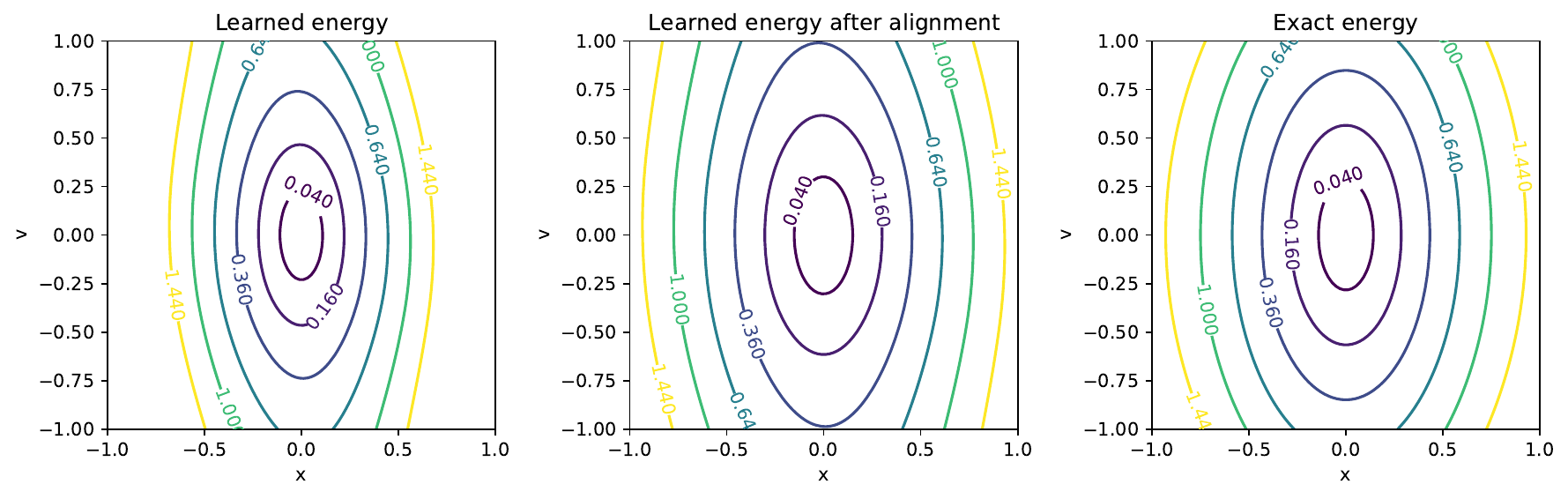}
  }
   \subfigure[ReQU OnsagerNet results for double-well potential
   $U(x)=(x^2-0.5)^2$ with $\gamma = 3$.]{
      \includegraphics[width=0.7\linewidth]{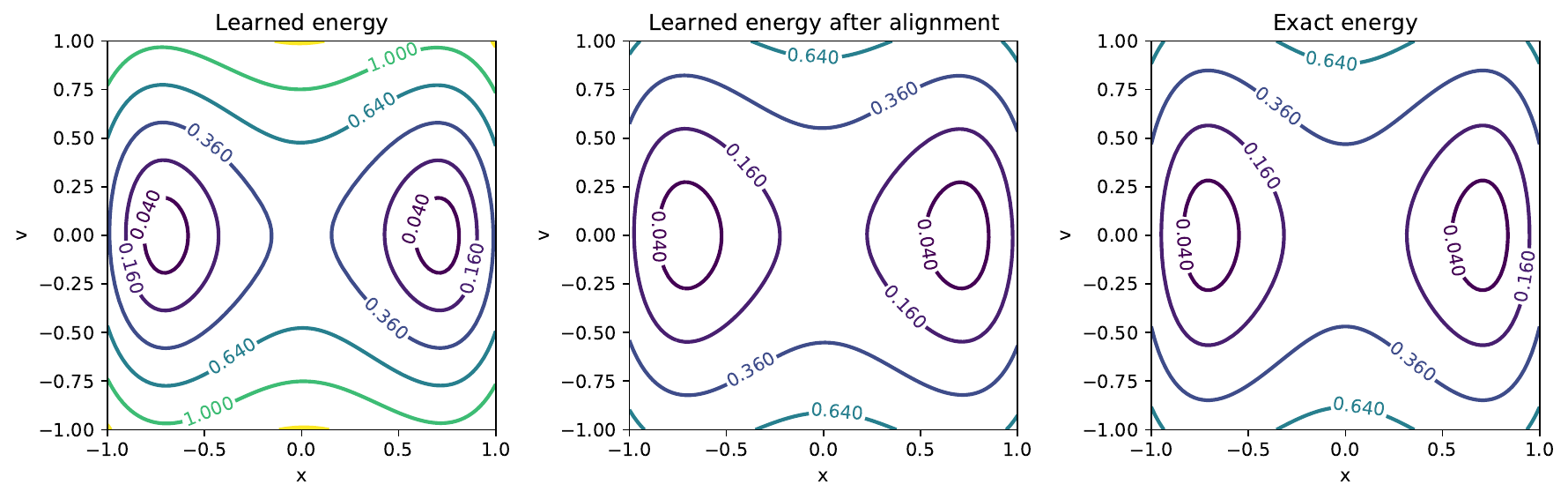}
  }
  \caption{\label{fig:LangevinPot} The learned energy
    functions and the exact energy functions in three problems.}
\end{figure}

In Fig. \ref{fig:LangevinPath}, we plot the trajectories for
exact dynamics and learned dynamics with initial values
starting from four boundaries of the sample region.  We see
that the learned dynamics are quantitatively accurate, and
the qualitative features of the phase space are also
preserved over long times.

\begin{figure}[ht]
  \centering
  \includegraphics[width=0.85\linewidth]{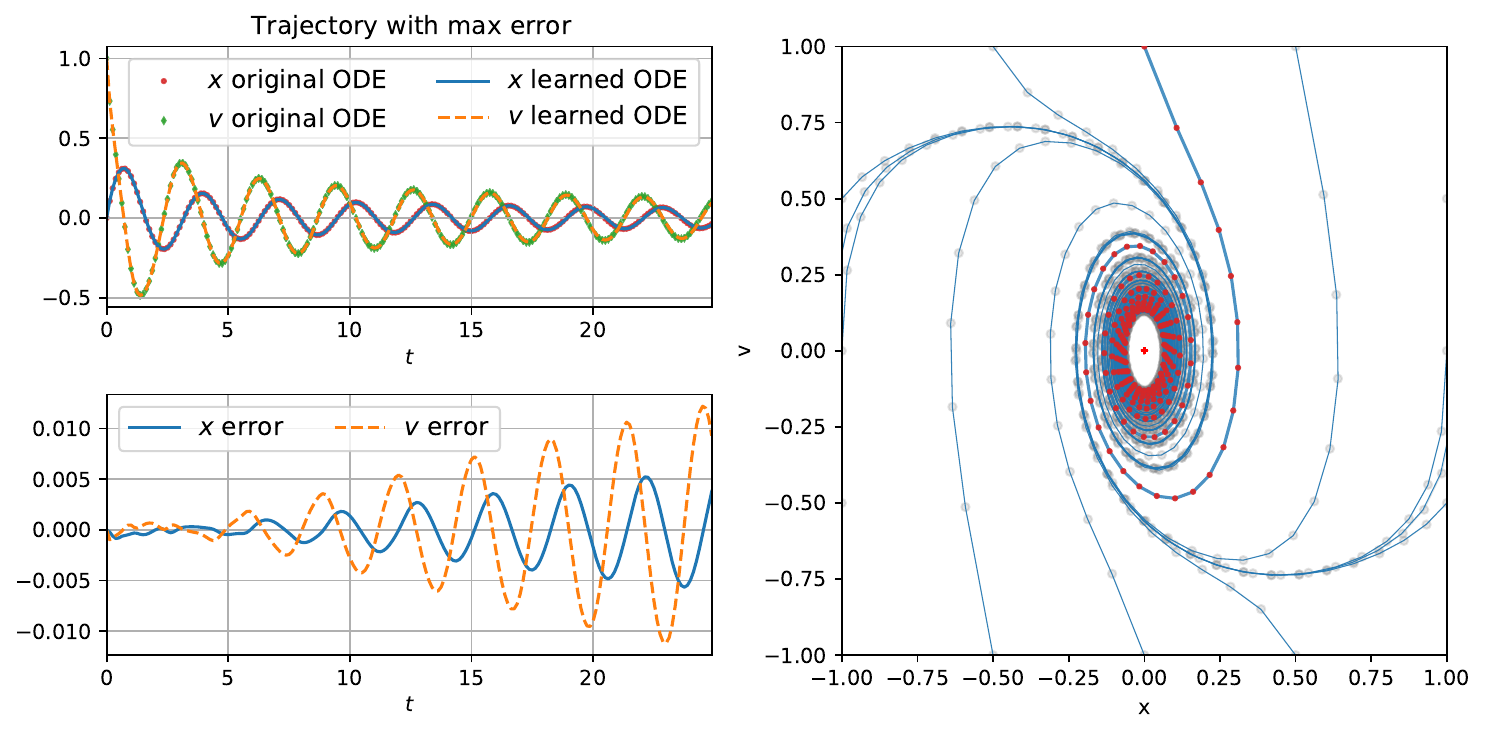}
  \caption{\label{fig:LangevinPath} Results of learned ODE
    model by ReQU OnsagerNet for the nonlinear damped
    pendulum problem.  In the right plot, the dots are
    trajectory snapshots generated from exact dynamics, the
    solid curves are trajectories generated from learned
    dynamics.  The red cross is the fixed point numerically
    calculated for the learned ODE system. Note that the
    time period of sampled trajectories used to train the
    OnsagerNet is $T=5$.}
\end{figure}

\subsection{Benchmark problem 2: the Lorenz '63 system}

The previous oscillator models have simple Hamiltonian
structures, so it is plausible that with some engineering
one can obtain a special structured model that works well
for such systems. In this section, we consider a target
nonlinear dynamical system that may not have such a simple
structure. Yet, we will show that owing to the flexibility of
the generalized Onsager principle, OnsagerNet still performs
well. Concretely, we consider the well-known Lorenz
system~\citep{lorenz_deterministic_1963}:
\begin{eqnarray}
  \frac{dX}{d\tau} & = & - \sigma X + \sigma Y, \label{Xeq} \\
  \frac{dY}{d\tau} & = & - XZ + rX - Y, \label{Yeq} \\
  \frac{dZ}{d\tau} & = & XY - bZ, \label{Zeq}
\end{eqnarray}
where $b>0$ is a geometric parameter, $\sigma$ is the
Prandtl number, $r$ is rescaled Rayleigh number.

The Lorenz system \eqref{Xeq}-\eqref{Zeq} is a simple
autonomous ODE system that produces chaotic solutions, and
its bifurcation diagram is well-studied
\citep{sparrow_lorenz_1982,barrio_threeparametric_2007}.  To
test the performance of OnsagerNet, we fix $b=8/3$,
$\sigma=10$, and vary $r$ as commonly studied in the
literature.  For the $b=8/3$, $\sigma=10$ case, the first
(pitchfork) bifurcation of the Lorenz system happens at
$r=1$, followed by a homoclinic explosion at
$r\approx 13.92$, and then a bifurcation to the Lorenz
attractor at $r\approx 24.06$. Soon after, the Lorenz
attractor becomes the only attractor at $r\approx 24.74$
(see e.g. \citep{zhou_study_2010}).  To show that OnsagerNet
is able to learn systems with different kinds of attractors
and chaotic behavior, we present results for $r=16$ and
$r=28$.

The procedure of generating sample data and training is
similar to the previously discussed case of learning
Langevin equations, except that here we set $\alpha=0.1$,
$\beta=0.1$ and a linear representation for ${f} (h)$ in
OnsagerNet (with $l=1$ and $n_l=20$). This is because the Lorenz system is a
forced
system.  The result for the case $r=16$ is presented in Fig
\ref{fig:Lorenz_r16}. We see that both the trajectories and
the stable fixed points and unstable limit cycles can be
learned accurately.  The results for the case $r=28$ is
presented in Fig \ref{fig:Lorenz_r28}. The largest Lyapunov
indices of numerical trajectories (run for sufficient long
times) are estimated using a method proposed by
\cite{rosenstein_practical_1993}. They are all positive,
which suggests that the learned ODE system indeed has
chaotic solutions.

\begin{figure}[!ht]
  \centering
  \includegraphics[width=0.65\linewidth]{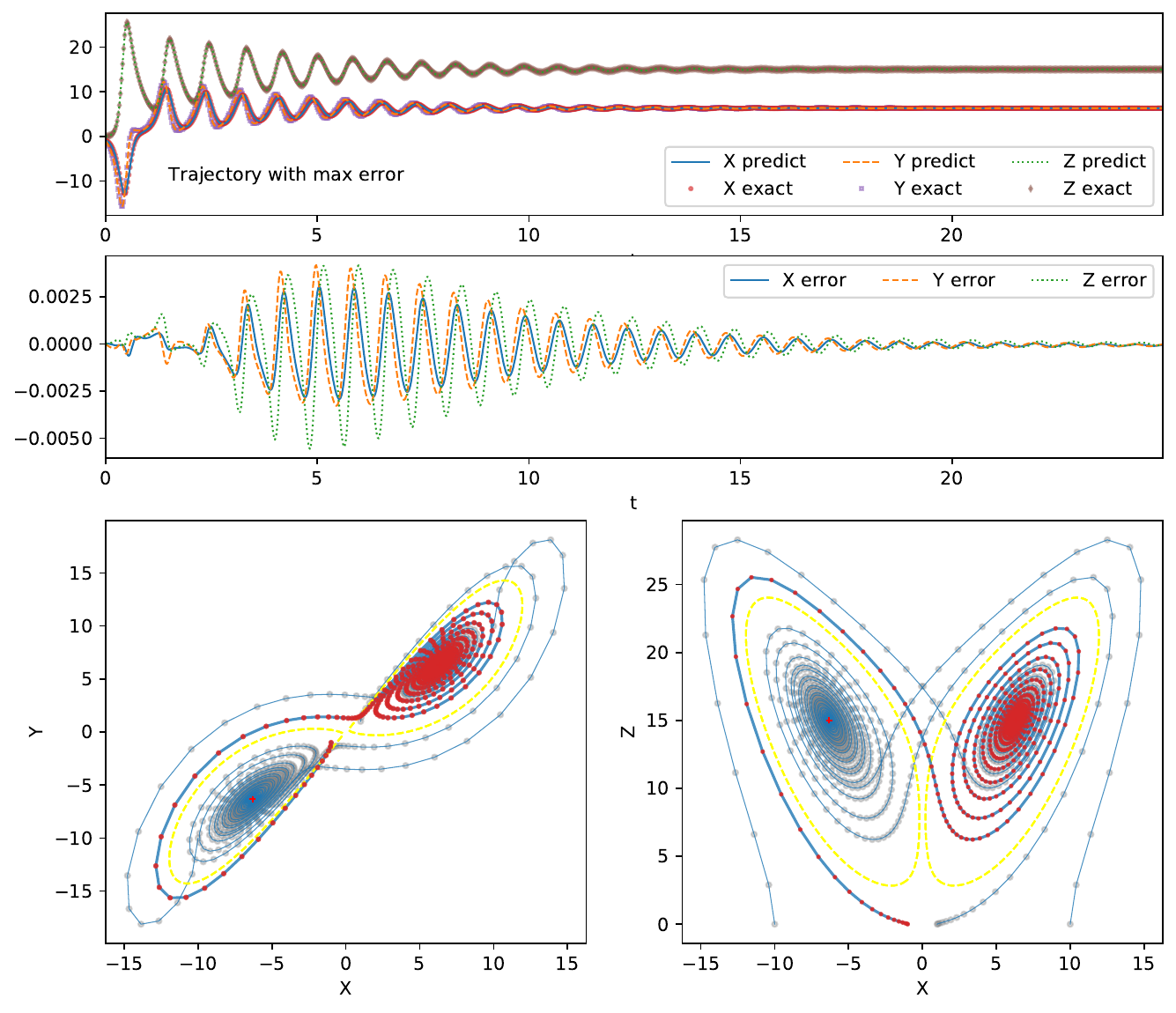}
  \caption{\label{fig:Lorenz_r16} Results of learned ODE
    model by ReQUr OnsagerNet for the Lorenz system
    \eqref{Xeq}-\eqref{Zeq} for $b=8/3, \sigma=10, r=16$.
    In the bottom plots, the dots are trajectory snapshots
    generated from exact dynamics, the solid curves are
    trajectories generated from learned dynamics, and the thick curve with red
    dots is the one with largest numerical error.  The small
    red crosses are the fixed points numerically calculated
    for the learned ODE system. The yellow closed curves
    are the unstable limit cycles calculated from the
    learned ODE system.}
\end{figure}

\begin{figure}[!ht]
  \centering
  \includegraphics[width=0.65\linewidth]{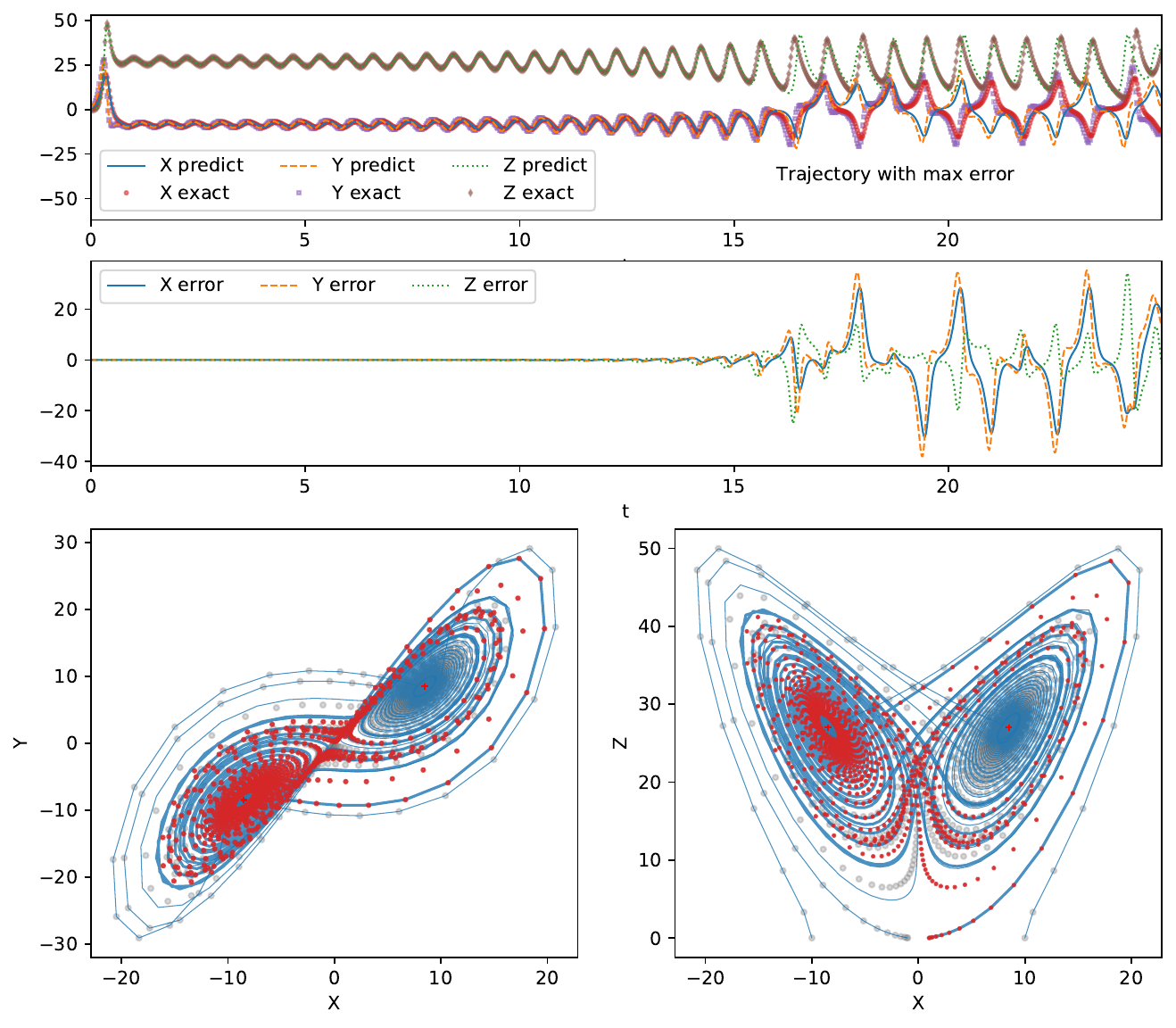}
  \caption{\label{fig:Lorenz_r28} Results of the learned
    ReQU OnsagerNet model for the Lorenz system
    \eqref{Xeq}-\eqref{Zeq} for $b=8/3, \sigma=10, r=28$.
    In the bottom plots, the dots are trajectory snapshots
    generated from exact dynamics, and the solid curves are
    trajectories generated from learned dynamics. The thick one with red dots
     is the one with largest numerical error.  The two
    red crosses are the (unstable) fixed points numerically
    calculated for the learned ODE system.}
\end{figure}

Finally, we compare OnsagerNet with MLP-ODEN for learning
the Lorenz system.  The SymODEN method
\citep{zhongSymplecticODENetLearning2020} cannot be applied
since the Lorenz system is non-Hamiltonian.
The OnsagerNets used here has one shared hidden layer with
20 hidden nodes, and the total number of trainable
parameters is 356. The MLP-ODEN nets have 2 hidden layers
with 16 hidden nodes in each layer, with a total 387 tunable
parameters.  In Fig. \ref{fig:Lorenz_cmp_ODENets}, we show
the accuracy on the test data set for OnsagerNet and
MLP-ODEN using $\tmop{ReQU}$ and $\tmop{ReQUr}$ as
activation functions, from which we see OnsagerNet performs
much better.
To ensure that these results hold across different model configurations,
we also tested learning the Lorenz system with larger OnsagerNets and MLP-ODENs with
3 hidden layers with 100 hidden nodes each.
Some typical training and testing loss curves are given
in Figure \ref{fig:Lorenz_nLw}, from which we see OnsagerNet perform better
than MLP-ODEN, and activation function ReQUr peforms better than {\tt tanh}
as activation function.
\begin{figure}[!ht]
  \centering
  \includegraphics[width=0.6\linewidth]{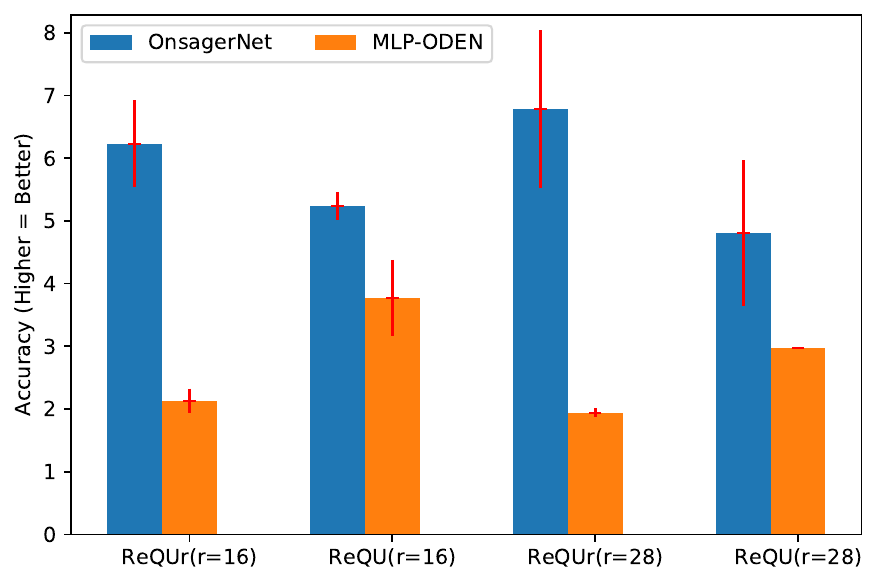}
  \caption{\label{fig:Lorenz_cmp_ODENets} The accuracy of
    learned dynamics by using OnsagerNets and MLP-ODEN with
    $\tmop{ReQU}$ and $\tmop{ReQUr}$ as activation functions
    for learning the Lorenz system with $r=16, 28$.  The
    height of the bars stands for $-\log_{10}$ of the
    testing MSE plus 3.5, the higher the better. The heights of the red
    crosses on top of the bars indicate standard deviations.
    The results are averages of trainings using three different random
    seeds.}
\end{figure}

\begin{figure}[!ht]
    \centering
    \subfigure[MLP-ODEN with ReQUr]{
    \includegraphics[width=0.38\linewidth]{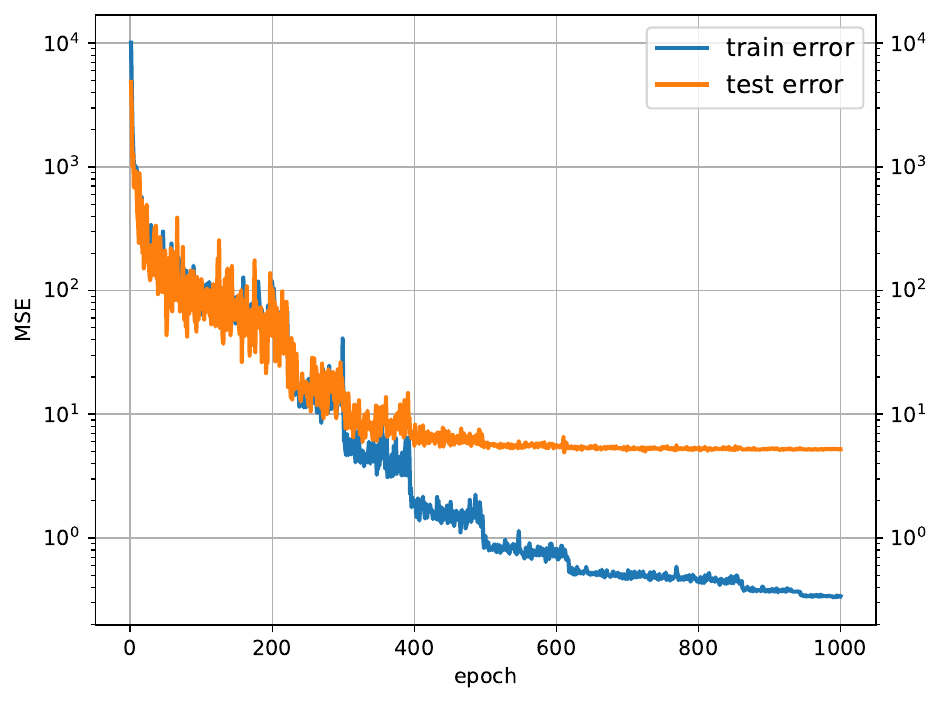} }
    \subfigure[MLP-ODEN with {\tt tanh}]{
    \includegraphics[width=0.38\linewidth]{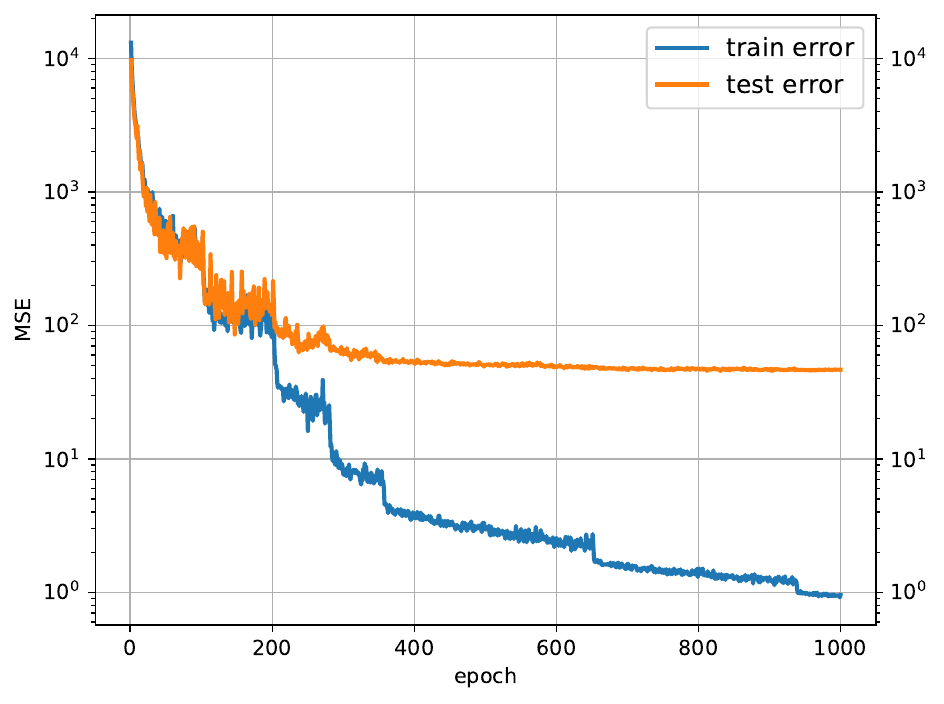} }
    \subfigure[OnsagerNet with ReQUr]{
    \includegraphics[width=0.38\linewidth]{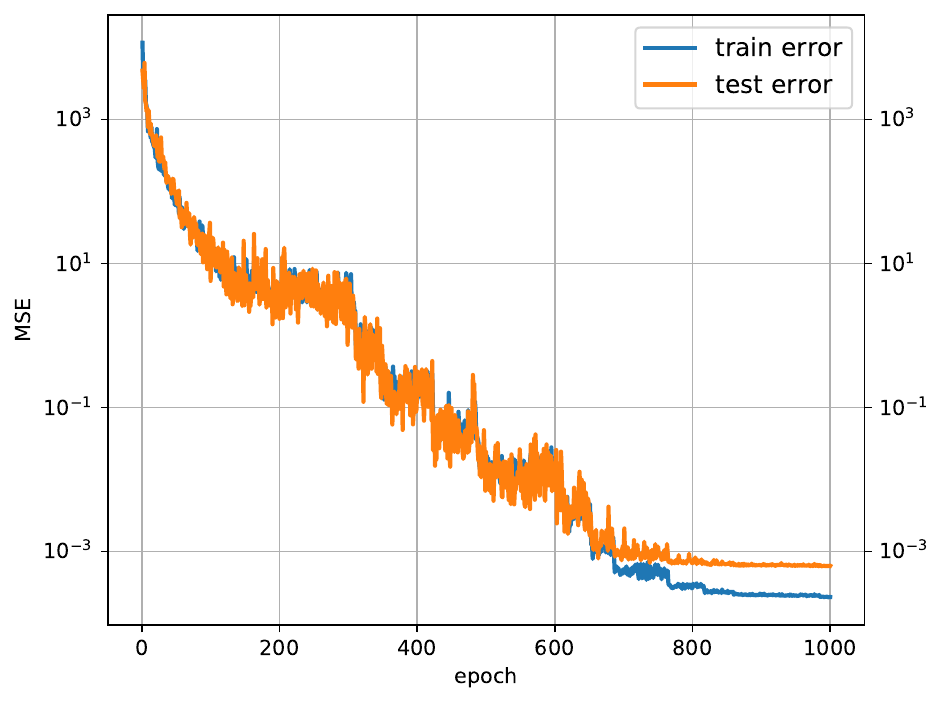} }
    \subfigure[OnsagerNet with {\tt tanh}]{
    \includegraphics[width=0.38\linewidth]{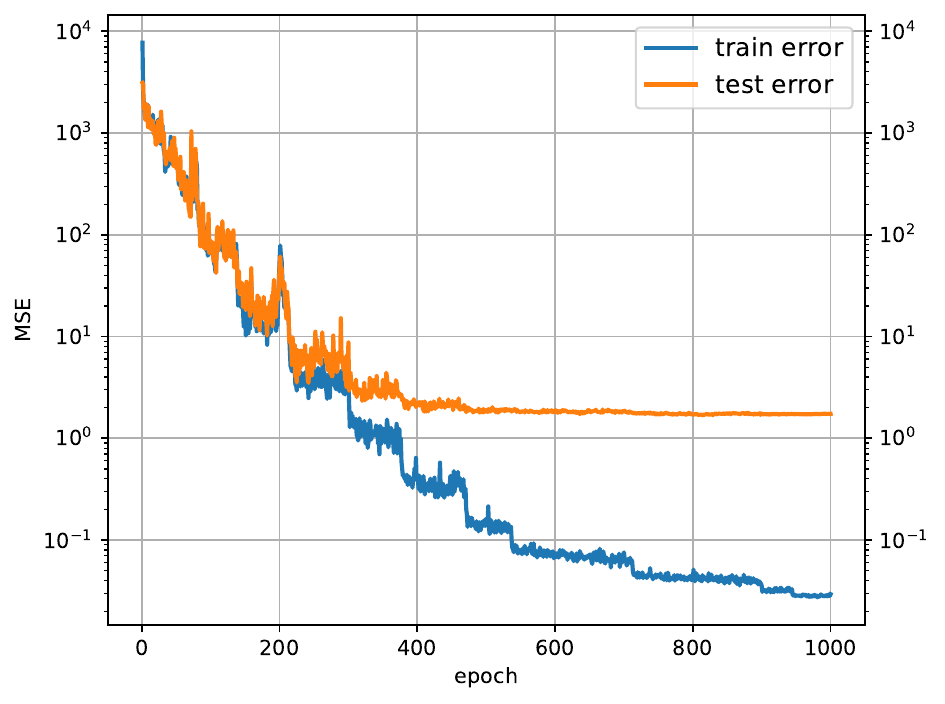} }
    \caption{\label{fig:Lorenz_nLw} Typical training MSE curves to learn the
    Lorenz system \eqref{Xeq}-\eqref{Zeq} ($r=28$) with
        over-parameterized neural ODE networks.}
\end{figure}

\subsection{Application to Rayleigh-B\'{e}nard convection with fixed parameters}

We now discuss the main application of OnsagerNet in this
paper, namely learning reduced order models for the
2-dimensional Rayleigh-B\'{e}nard convection (RBC) problem,
described by the coupled partial differential equations
\begin{align}
 & \partial_t{\bu} + (\bu \cdot \nabla) \bu = \nu \Delta \bu - \nabla p +
  \alpha_0  g_0 \hat{y}  \theta,
   \qquad \nabla \cdot \bu = 0, \label{nse1}\\
  & \partialt{\theta} + \bu \cdot \nabla \theta = \kappa \Delta \theta +
  \Gamma v, \label{nse2}
\end{align}
where $\bu = (u, u, 0)$ is the velocity field,
$g_0$ the gravitational acceleration, $\hat{y}$ is the
unit vector opposite to gravity, $\theta$
the departure of the temperature from the linear temperature
profile $\bar{\theta} (y) = \bar{\theta}_H - \Gamma y$,
$\Gamma = (\bar{\theta}_H - \bar{\theta}_L) / d$. $d$ is the
depth of the channel between two plates. The constants
$\alpha_0, \nu$, and $\kappa$ denote, respectively, the coefficient of thermal
expansion, the kinematic viscosity, and the thermal
conductivity.  The system is assumed to be homogeneous in
$z$ direction, and periodic in $x$ direction, with period
$L_x$. { The dynamics depends on three non-dimensional parameters:
the Prandtl number $Pr = \nu / \kappa$,
the Rayleigh number $Ra = d^4\frac{g_0 \alpha_0 \Gamma}{\nu \kappa}$,
and aspect ratio $a = 2d / L_x$.
The critical Rayleigh number to maintain the stability of zero
solution is $R_c = \pi^4 (1 + a^2)^3 / a^2$.
The terms $ \alpha_0  g_0 \hat{y}  \theta$ in \eqref{nse1} and $\Gamma
v$ in \eqref{nse2} are external forcing terms.
For given divergence-free velocity $\bu$ with
no-flux or periodic boundary conditions, the convection operator $\bu\cdot
\nabla$ is skew-symmetric. Thus, the RBC equations satisfy the generalized
Onsager principle with potential $\frac12\|u\|^2 + \frac{1}{2} \|\theta\|^2$.
Therefore, the OnsagerNet approach, which places the prior that
the reduced system also satisfy such a principle, is appropriate
in this case.}
The velocity $\bu$ can be represented by a
stream function $\phi (x, y)$:
\begin{equation}
  \bu = (- \partial_y \phi, \partial_x \phi, 0) . \label{uphirel}
\end{equation}
By eliminating pressure, one gets the following equations for $\phi$ and
$\theta$:
\begin{align}
  \partial_t \Delta \phi - \partial_y \phi \partial_x (\Delta \phi) +
  \partial_x \phi \partial_y (\Delta \phi)
  &= \nu \Delta^2 \phi + g_0 \alpha_0
    \partial_x \theta, \label{phieq}
  \\
  \partial_t \theta - \partial_y \phi \partial_x \theta + \partial_x \phi
  \partial_y \theta
  &= \kappa \Delta \theta + \Gamma \partial_x \phi .
\label{thetaeq}
\end{align}

  The solutions
$\phi, \theta$ to~\eqref{phieq} and~\eqref{thetaeq} have representations
in Fourier sine series as
\[ \theta (x, y) = \sum_{k_1 = -\infty}^{\infty} \sum_{k_2 = 1}^{\infty}
  \theta_{k_1 k_2} e^{2i \pi k_1 x / L_x} \sin (\pi k_2 y / d) \text{}, \]
 \[ \phi (x, y) = \sum_{k_1 = -\infty}^{\infty} \sum_{k_2 = 1}^{\infty}
   \phi_{k_1 k_2} e^{2i \pi k_1 x / L_x} \sin (\pi k_2 y / d), \] where
 $\theta_{k_1 k_2} = \bar{\theta}_{- k_1 k_2}$ and
 $\phi_{k_1 k_2} = \bar{\phi}_{- k_1 k_2}$ since $\theta$ and
 $\phi$ are real.  In the Lorenz system, only 3 most important modes
 $\phi_{11}$, $\theta_{11}$ and $\theta_{02}$ are retained, in this
 case, the solution have following form
\begin{equation}
  \phi (x, y, t) = \frac{(1 + a^2) \kappa}{a} \sqrt{2} X (t) \sin (2\pi x / L_x) \sin
  (\pi y / d), \label{phiX}
\end{equation}
\begin{equation}
  \theta (x, y, t) = \frac{R_c \Gamma d}{\pi \Ra}
  \left( \sqrt{2} Y (t) \cos(2\pi x / L_x) \sin (\pi y / d) - Z (t) \sin (2 \pi y / d) \right),
  \label{thetaYZ}
\end{equation}
The Lorenz '63 equations \eqref{Xeq}-\eqref{Zeq} for the
evolution of $X, Y, Z$ is obtained by a Galerkin approach
{\citep{lorenz_deterministic_1963}}
with time rescaling $\tau = \pi^2 \frac{(1 + a^2)}{d^2} \kappa t$, the
rescaled Rayleigh number $r = \Ra / R_c$. $b = 4 / (1 + a^2)$, and
$\sigma = \nu / \kappa$ is the Prandtl number.

Since Lorenz system is aggressively truncated from the original RBC
problem, it is not expected to give quantitatively
accurate prediction of the dynamics of the original system when $r\gg 1$.
Some extensions of Lorenz system to dimensions higher than 3 are proposed, e.g. Curry's 14-dimensional model \citep{curry_generalized_1978}. But numerical experiments have shown that much large numbers of spectral coefficients need be retained to get quantitatively good results in a Fourier-Galerkin approach \citep{curryOrderDisorderTwo1984}. In fact,
\cite{curryOrderDisorderTwo1984} used more than 100 modes to obtain convergent results
for a parameter region similar to $b=8/3$, $\sigma=10$, $r=28$ used by Lorenz.
In the following, we show that by using OnsagerNet, we are able to directly
learn reduced order models from RBC solution data that are quantitatively
accurate and require much fewer modes than the Galerkin projection method.

\subsubsection{Data generation and equation parameters}

We use a Legendre-Fourier spectral
method to solve the Rayleigh-B\'{e}nard convection problem
{\eqref{nse1}}-{\eqref{nse2}} based on stream function formulation
{\eqref{phieq}}-{\eqref{thetaeq}} to generate sample data. To be consistent with the case
considered by Lorenz, we use free boundary condition for velocity.  A
RBC problem is chosen with the following parameters: $\tmop{Re} = 10$,
$L_x = 4 \sqrt{2}$, $\alpha_0 = 0.1$, $g = 9.8$, $\kappa = 0.01$,
$\Gamma = 1.17413$.  The corresponding Prandtl number, Rayleigh number
are $\sigma = 10$, $\tmop{Ra} = 18410.3$. The relative Rayleigh number is
$r=\tmop{Ra} / \tmop{Rc} = 28$. In this section, we use OnsagerNet to learn one
dynamical model for each fixed $r$ value.
The results of learning one parametric dynamical model for a
wide range of $r$ value are given in
next subsection. 
Initial conditions of Lorenz form {\eqref{phiX}}-{\eqref{thetaYZ}} are
used, where $X, Y, Z$ are sampled from Gaussian distribution and
rescaled such that $X^2 + Y^2 + Z^2 \leqslant R_B$, with $R_B$ a constant.

The semi-discretized Legendre-Fourier system is solved using a second
order implicit-explicit time marching scheme with time step-size $\tau = 0.001$
for $100$ time units. $128$ real Fourier modes are used for $x$ direction and
$49$ Legendre modes for $y$ direction. To generate training and testing data,
we simulated 100 initial values. The solver outputs 2 snapshots of $(u, v,
\theta)$ at $(t, t + \tau)$ for each integer time $t$. The data from the first
80 trajectories are used as training data, while the last 20 trajectories are
used as testing data.

\begin{figure}[ht]
    \centering \subfigure[The relative variance of first 32 principal
    components (PC)]{\label{fig:pca1}
        \includegraphics[width=0.45\textwidth]{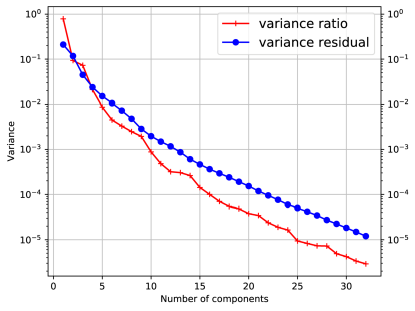}}
    \subfigure[The first 3 principal components of the sampled trajectories
    of Rayleigh-B\'{e}nard convection problem.]{\label{fig:traj3pc}
        \includegraphics[width=0.45\textwidth]{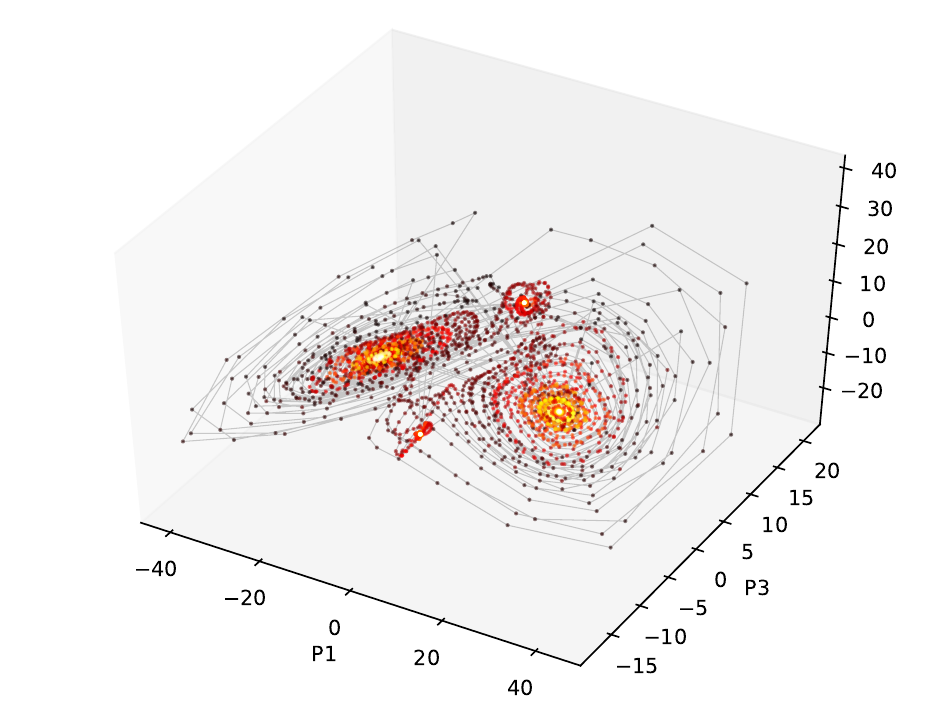}}
    \caption{Relative variance of principle components and the sample trajectories for the Rayleigh-B\'{e}nard convection
        problem with $r=28$ projected on the first three components.
        The trajectories are generated using Lorenz type
        initial values with random amplitude. }
\end{figure}

To have an estimate of the effective dimension, we first apply PCA to the
data. The result for $r=28$ is shown in Fig. \ref{fig:pca1}. We observe that
99.7\% variance are captured
by the first 9 principal components, so we seek a reduced model of comparable dimensions.

\subsubsection{The structure of OnsagerNet}

The network parameters are similar to that in learning the Lorenz ODE
system. Since the Rayleigh-B\'{e}nard convection system is not
a closed system, we ensure the stability established in Theorem
\ref{thm:noblowup} by taking $\alpha= 0.1$, $\beta = 0.1$. To make
${f}(h)$ Lipschitz, we simply let
${f}(h)$ be an affine function of $h$.
We use two common hidden layers (i.e. $l=2$) with each layer having
$n_1=2 C^m_{m + 2}$ hidden nodes for evaluating $L (h), R(h), U_i (h)$ in
OnsagerNet.
We use ReQUr as activation function in this application as it
gives slight better training results than ReQU.
The ReQUr OnsagerNet is trained using the standard Adam optimizer
\citep{kingmaAdamMethodStochastic2015}
to minimize the loss with one embedded RK2 step.
The embedding of multiple RK2 steps
improves the accuracy only slightly since the time interval of data set
$\tau=0.001$ is small, and so the error of RK2 integrator is not the major error
in the total loss.
The use of multiple hidden layers in OnsagerNet also improves accuracy.
Since our purpose is to find small models for the dynamics, thus we only
present numerical results of OnsagerNets with two common hidden layers.

\subsubsection{Numerical results for r=28}

We now present numerical results of the RBC problem with $b=8/3$, $\sigma=10$, and $r=28$,
a parameter set when used in the Lorenz approximation leads to chaotic solutions, but
for the original RBC problems, have only fixed points as attractors.
We perform PCA for the sampled data and train
OnsagerNets using $3, 5, 7$ principal components.
The OnsagerNets are trained with  batch size 200, initial learning rate $0.0064$
and reduced by half if the loss is not decrease in 25 epochs.
After the reduced ODE
models are learned, we simulate the learned equations using a third order Runge-Kutta method \citep{shu_efficient_1988}
for one time unit with initial conditions taken from the PCA data
at $t_j, j = 0, \ldots 99$, and compare the results to the original PCA
data at $t_{j + 1}$.
To show the learned ODE system is stable, we also
simulate the learned ODE models for $99$ time units.
We summarize the
results in Table \ref{tbl:RBCres}.
For comparison, we also present results of
MLP-ODEN.
We see that OnsagerNets have better long time stability than MLP-ODEN.
The huge $t=99$ prediction error in MLP-ODEN PCA $m=5$ model indicates that
the MLP-ODEN model learned is not stable.
From Table \ref{tbl:RBCres}, we also observe that the OnsagerNets
using only three variables ($m=3$) can give good prediction for the period of one
time unit ($E_{t=1}^{pred, rel}$), but has a large relative $L^2$ error ($E_{t=99}^{pred, rel}$) for long times $(t = 99)$.
By increasing $m$ gradually to $5, 7$, both the short time and long
time prediction accuracy increase.

\begin{table}[ht]
    \begin{center}
    \renewcommand{\tabcolsep}{0.3cm}
    \caption{\label{tbl:RBCres} Numerical results of learning a reduced
        hidden dynamics for the RBC problem ($r=28$). }
        \begin{tabular}{rcccccr}
            \hline\hline
          Method\&Dim & \# Parameters & $\tmop{MSE}_{\tmop{train}}$
          & $\tmop{MSE}_{\tmop{test}}$ & $E_{t = 1}^{pred,rel}$ & $E_{t =
          99}^{pred, rel}$
          & $N_{\tmop{fail}}$\\
          \hline
MLP-ODEN PCA 3 &   983 & 2.63$\times 10^{-1}$ & 3.37$\times 10^{-1}$ &
2.32$\times 10^{-2}$  & 3.51$\times 10^{-1}$ & 62/3
\\
MLP-ODEN PCA 5 &  4079 & 2.95$\times 10^{-2}$ & 7.84$\times 10^{-2}$ &
8.18$\times 10^{-3}$  & 4.12$\times 10^{+4}$ & 16/3
\\
MLP-ODEN PCA 7 & 11599 & 6.60$\times 10^{-3}$ & 2.68$\times 10^{-2}$ &
3.71$\times 10^{-3}$  & 4.79$\times 10^{-2}$ &  7/3
\\
          \hline
OnsagerNet PCA 3 &   776 & 3.17$\times 10^{-1}$ & 3.85$\times 10^{-1}$ &
2.54$\times 10^{-2}$  & 2.76$\times 10^{-1}$ &
53/3 \\
OnsagerNet PCA 5 &  3408 & 3.88$\times 10^{-2}$ & 7.47$\times 10^{-2}$ &
8.40$\times 10^{-3}$  & 6.87$\times 10^{-2}$ &
13/3 \\
OnsagerNet PCA 7 & 10032 & 6.71$\times 10^{-3}$ & 1.26$\times 10^{-2}$ &
2.68$\times 10^{-3}$  & 1.07$\times 10^{-2}$ &
2/3 \\          \hline
       OnsagerNet AE\  3 & 2.5M (AE) $+$ 776 & 4.95$\times 10^{-3}$ &
       1.97$\times 10^{-2}$ &
       5.34$\times 10^{-3}$ & 9.64$\times 10^{-2}$ & 16/3 \\
       OnsagerNet AE\  5 & 2.5M (AE) $+$ 3408 & 3.71$\times 10^{-3}$ & 1.19$\times 10^{-2}$ &
       3.35$\times 10^{-3}$ & 4.63$\times 10^{-2}$ & 9/3 \\
       OnsagerNet AE\  7 & 2.5M (AE) $+$ 10032 & 1.46$\times 10^{-3}$ & 4.88$\times 10^{-3}$ &
       1.48$\times 10^{-3}$ & 1.93$\times 10^{-2}$ & 3/3 \\
          \hline
        \end{tabular}\\
    \scriptsize $N_{\tmop{fail}}$ in last column presents the
        average (over 3 random seeds) number of trajectories (out of 100) in
        the learned ODE systems that do not converge to the correct final steady
        states.
    \end{center}
\end{table}

\begin{figure}
    \centering \subfigure[OnsagerNet + PCA $ m = 7$
    ]{\includegraphics[width=0.45\linewidth]{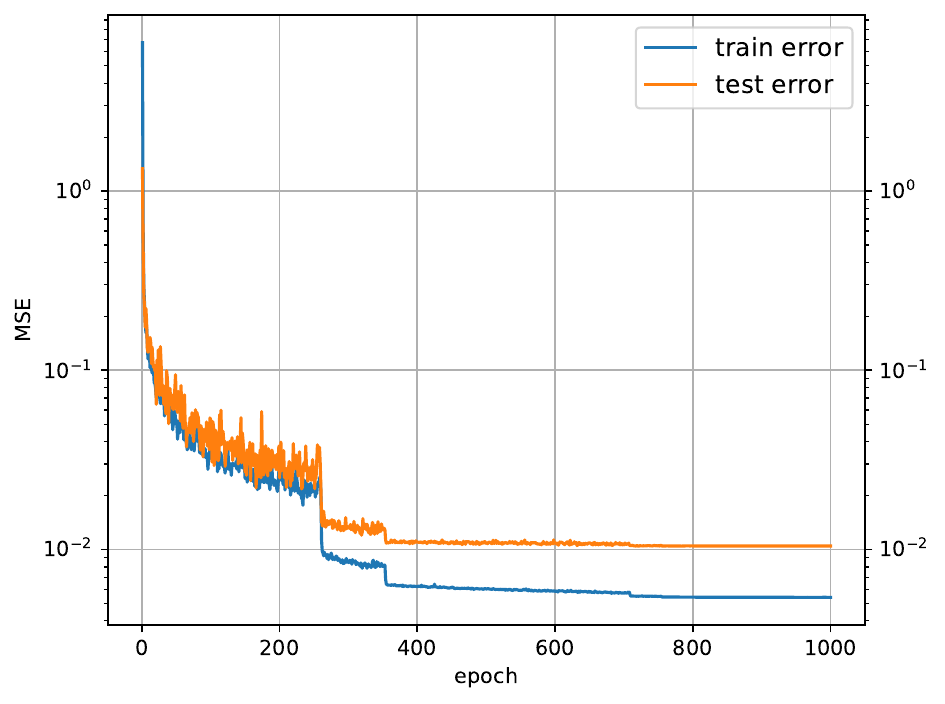}
    }
    \subfigure[MLP-ODEN + PCA $m =
    7$]{\includegraphics[width=0.45\linewidth]{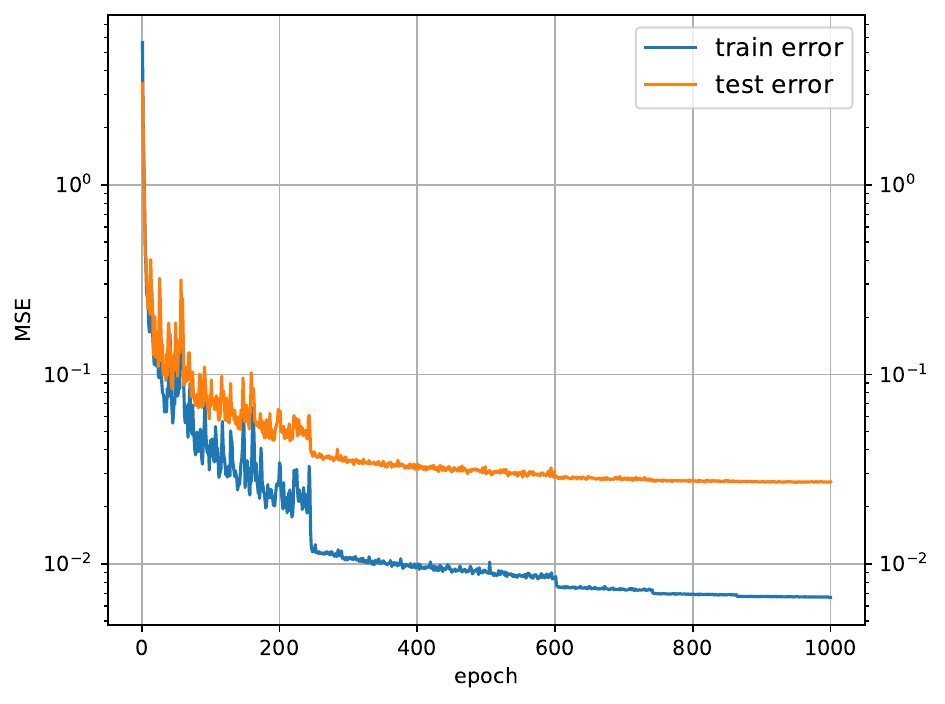}
    }
    \caption{The training and testing loss of OnsagerNet and MLP-ODEN for
        learning the RBC problem with $r=28$.}
    \label{fig:training}
\end{figure}

Some detailed training and testing losses are given in Figure \ref{fig:training}.
The gap between train error and testing error indicates that the train data
has non-negligible sampling error. But from the curve of testing error
shown in Fig \ref{fig:training}(a), we see the model is not overfitting.
We also observe MLP-ODEN has a much larger training-testing error gap than OnsagerNet.

To clearly visualize the numerical errors with respect to time, we show
numerical results of using learned OnsagerNets to simulate one selected trajectory in training set and one
in testing set with relatively large errors in Fig. \ref{fig:RBCtroj}. We see
that as more and more
hidden variables are used, the ODE models learned by OnsagerNets are increasingly
accurate, even for long times.

\begin{table}[ht]
    \begin{center}
        \renewcommand{\tabcolsep}{0.3cm}
        \caption{\label{tbl:RBCres_nn_ons}
            PCA trajectory prediction error on {\em testing set} for the RBC
            problem ($r=28$)
            using
            OnsagerNet and
            nearest neighbor(NN) method. }
        \begin{tabular}{crcccc}
            \hline\hline
             Method & Dim & $E_{t=1}^{pred,rel}$
                & $E_{t = 10}^{pred, rel}$
                &  $E_{t=20}^{pred,rel}$
                & $E_{t=99}^{pred, rel}$ \\
            \hline
          OnsagerNet &  $m$=3 & 2.65$\times 10^{-2}$ & 8.47$\times 10^{-2}$ &
          1.30$\times 10^{-1}$ & 2.63$\times 10^{-1}$ \\
          OnsagerNet &  $m$=5 & 9.29$\times 10^{-3}$ & 3.73$\times 10^{-2}$ &
          6.91$\times 10^{-2}$ & 8.46$\times 10^{-2}$ \\
          OnsagerNet &  $m$=7 & 3.24$\times 10^{-3}$ & 1.28$\times 10^{-2}$ &
          1.73$\times 10^{-2}$ & 3.23$\times 10^{-3}$ \\
            \hline
          NN &  $m$=3 & 9.76$\times 10^{-2}$ & 1.40$\times 10^{-1}$ &
          1.93$\times 10^{-1}$ & 1.02$\times 10^{-2}$  \\
          NN &  $m$=5 & 1.09$\times 10^{-1}$ & 1.676$\times 10^{-1}$ &
          2.10$\times 10^{-1}$ & 1.75$\times 10^{-1}$  \\
          NN &  $m$=7 & 1.13$\times 10^{-1}$ & 1.74$\times 10^{-1}$ &
          2.20$\times 10^{-1}$ & 2.69$\times 10^{-1}$ \\
          \hline
        \end{tabular}\\
    \end{center}
\end{table}

\begin{table}[ht]
    \begin{center}
        \renewcommand{\tabcolsep}{0.3cm}
        \caption{\label{tbl:RBCres_LSTM}
            PCA trajectory prediction MSE for the RBC problem with $r=28$ using
            LSTM.
            Since LSTM requires fixing the time sequence length a prior for prediction,
            different models have to be trained for different predictive horizons or resolutions.
            The architecture of the LSTM is chosen so that for each PCA embedding dimension (Dim),
            its number of trainable parameters is similar to that of the OnsagerNet.
        }
        \begin{tabular}{cccc}
            \hline\hline
            Dim & \# Parameters & $E_{t = 1}^{pred, rel}$ & $E_{t = 99}^{pred,
            rel}$ \\
            \hline
              $m=3$ & 1131 & 4.15$\times 10^{-1}$ & 2.09$\times 10^{-1}$ \\
              $m=5$ & 4245 & 1.23$\times 10^{-1}$ & 3.82$\times 10^{-1}$ \\
              $m=7$ & 15653 & 1.02$\times 10^{-1}$ & 3.73$\times 10^{-1}$ \\
            \hline
        \end{tabular}\\
    \end{center}
\end{table}

The results of training OnsagerNet together with a {PCA-ResNet} autoencoder
and regularized by the isometric loss defined in Eq. \eqref{eq:e2e_loss} are
also presented Table
\ref{tbl:RBCres}.
The autoencoder we used has 2 hidden encode layers and 2 hidden decode layers
with ReQUr as activation functions.
From Table
\ref{tbl:RBCres}, we see the results are improved in this ``Autoencoder + OnsagerNet'' approach, especially for models with few
hidden variables and short time predictions.

To demonstrate the advantange of PCAResNet for dimensionality reduction,
we also carried out additional numerical experiments on using different
autoencoders, including a standard stacked autoencoder (SAE), a
stacked autoencoder with contractive regularization
(CAE), which is a multilayer generalization of the original CAE
\cite{rifaiContractiveAutoEncodersExplicit2011}, and the
PCAResNet.
All three autoencoders use two hidden layers with 128 and 32 nodes respectively.
The activation function for PCAResNet is ReQUr, while the activation
functions for SAE and CAE are {\tt elu}. We tested other activation functions
including ReLU, softplus, elu, tanh and sigmoid for CAE and SAE,
and we found that {\tt elu} produce the best training results.
When training SAE and CAE with OnsagerNet, we first
pre-train SAE and CAE, than train OnsagerNet with low-dimensional
trajectories produced by pre-trained SAE and CAE, then train the autoencoder and
OnsagerNet together in the final step.
Note that PCAResNet does not require a pre-train step,
since it is initialized by a simple PCA. The performance of the three
different autoencoders are reported in Table \ref{tbl:ae_cmp}, where the
$\beta_{isom}$ and $\alpha_{isom}$ in equation \eqref{eq:e2e_loss} is
related to the parameter
$\beta_{isom}\star$ and $\alpha_{isom}^\star$ in Table \ref{tbl:ae_cmp} by:
\begin{equation}
\beta_{isom} = \beta_{isom}^\star \times
\frac{\mbox{pre-trained OnsagerNet MSE loss}}{\mbox{PCA isometric loss}},
\quad
\alpha_{isom} = \alpha_{isom}^\star \times [{\mbox{PCA isometric loss}}]
\end{equation}
From this table, we observe that PCAResNet produce better long time results
than SAE and CAE both with and without isometric regularization.
From the numerical results presented in
Table \ref{tbl:ae_cmp}, we clearly see the benefits of isometric
regularization - it reduces overfitting
(the ratio between testing MSE and training MSE is much smaller
when isometric regularization is used)
and improves the long time performance for all the three tested
autoencoders. Note that $\beta_{isom}\star=1$ and
$\alpha_{isom}^\star=0.8$ is used to produce the autoencoder results
presented in Table \ref{tbl:RBCres}.

\begin{table}[!htp]
    \renewcommand\arraystretch{1.2}
    \begin{center}
        \renewcommand{\tabcolsep}{0.3cm}
        \caption{\label{tbl:ae_cmp} Performance of three Autoencoders for RBC
            problem ($r=28$, nPC=3). }
        \begin{tabular}{rccrrrrrr}
            \hline\hline
            Autoencoders & $\beta_{isom}^\star$ & $\alpha_{isom}^\star$ &
            $\tmop{MSE}_{\tmop{train}}^{\mbox{\tiny ode}}$  &
            $\tmop{MSE}_{\tmop{test}}^{\mbox{\tiny ode}}$
            & $ \tfrac{\tmop{MSE}_{\tmop{test}}^{\mbox{\tiny
                        ode}}}{\tmop{MSE}_{\tmop{train}}^{\mbox{\tiny ode}}}$
            $\vphantom{ \frac{\frac{\mbox{O}}{D}}{\frac{\mbox{O}}{1} } }$
            & $E_{t=1}^{pred,rel}$ & $E_{t=99}^{pred, rel}$
            &$N_{\tmop{fail}}$\\
            \hline
            PCAResNet &1 & 0.0 & 1.99$\times 10^{-2}$ & 5.25$\times 10^{-2}$ &
            2.64 &
            7.53$\times 10^{-3}$ & 3.21$\times 10^{-1}$ & 85/5 \\
            PCAResNet &1 & 0.4 &1.27$\times 10^{-2}$ & 3.40$\times 10^{-2}$
            & 2.68 & 6.49$\times 10^{-3}$ & 2.67$\times 10^{-1}$ & 72/5 \\
            PCAResNet &1 & 0.8 & 4.91$\times 10^{-3}$ & 1.99$\times 10^{-2}$ &
            4.05 & 5.38$\times 10^{-3}$ & 1.01$\times 10^{-1}$ & {27/5} \\
            PCAResNet &1 & 1.2 & 4.00$\times 10^{-3}$ & 1.70$\times 10^{-2}$ &
            4.25 & 5.14$\times 10^{-3}$ & {\bf 8.64$\mathbf{\times 10^{-2}}$} & {\bf
                24/5} \\
            PCAResNet &1 & 1.6 & 3.78$\times 10^{-3}$ & 1.39$\times 10^{-2}$ &
            3.68 & 5.41$\times 10^{-3}$ & 1.31$\times 10^{-1}$ & 37/5 \\
            PCAResNet & 0 & - &1.07$\times 10^{-4}$ & 3.37$\times 10^{-3}$
            &31.50   & 2.24$\times 10^{-3}$ & 9.17$\times 10^{-2}$ & 49/5 \\
            \hline
            SAE &1 & 0.8 & 6.64$\times 10^{-3}$ & 3.62$\times 10^{-2}$ &
            5.45 & 4.13$\times 10^{-3}$ & 2.39$\times 10^{-1}$ & 92/5 \\
            SAE & 0 & - & 3.30$\times 10^{-4}$ & 2.26$\times 10^{-2}$
            & 68.49 & 6.60$\times 10^{-3}$ & 1.86$\times 10^{-1}$ & 96/5 \\
            \hline

            CAE &1 & 0.8 &5.58$\times 10^{-3}$ & 2.80$\times 10^{-2}$ &
            4.09 & 3.99$\times 10^{-3}$ & 1.62$\times 10^{-1}$ & 67/5 \\
            CAE & 0 & - & 3.45$\times 10^{-4}$ & 6.41$\times 10^{-2}$
            & 185.80 & 1.37$\times 10^{-2}$ & 3.26$\times 10^{-1}$ & 135/5 \\
            \hline
        \end{tabular}\\
        {\scriptsize
            $N_{\tmop{fail}}$ is the number of
            trajectories (out of 100, averaged over 5 seeds) in the learned
            ODE system that failed to correct final steady state.}
    \end{center}
\end{table}

We also carry out a comparison on the PCA trajectory prediction error
among the OnsagerNet, Long Short Term Memory (LSTM) recurrent neural network, and nearest neighbor
method (NN).
Since LSTM is a discrete-time process, it cannot learn an underlying ODE model.
Rather, we focus on predictions over fixed time intervals ($\Delta t = 0.01$).
On the other hand, given a test initial condition,
the NN method simply selects the closest initial condition from the training set
and use its associated trajectory as a prediction.
The numerical results are given in Table \ref{tbl:RBCres_nn_ons}
and Table \ref{tbl:RBCres_LSTM}. We see OnsagerNet out-performs NN method for
all cases except for $m=3$ and prediction time $t=99$. By using more PCs,
the OnsagerNet gives better results, but NN does not. From Table
\ref{tbl:RBCres_LSTM}, we also observe that LSTM does not give better results by
using more PCs.
We note that both LSTM and NN can only serve as predictive tools for trajectories
but does not give rise to a reduced dynamical model in the form of differential equations.

\begin{figure}[!ht]
\centering
  \subfigure[Trajectory 4, $m=3$]{
    \includegraphics[width=0.47\textwidth]{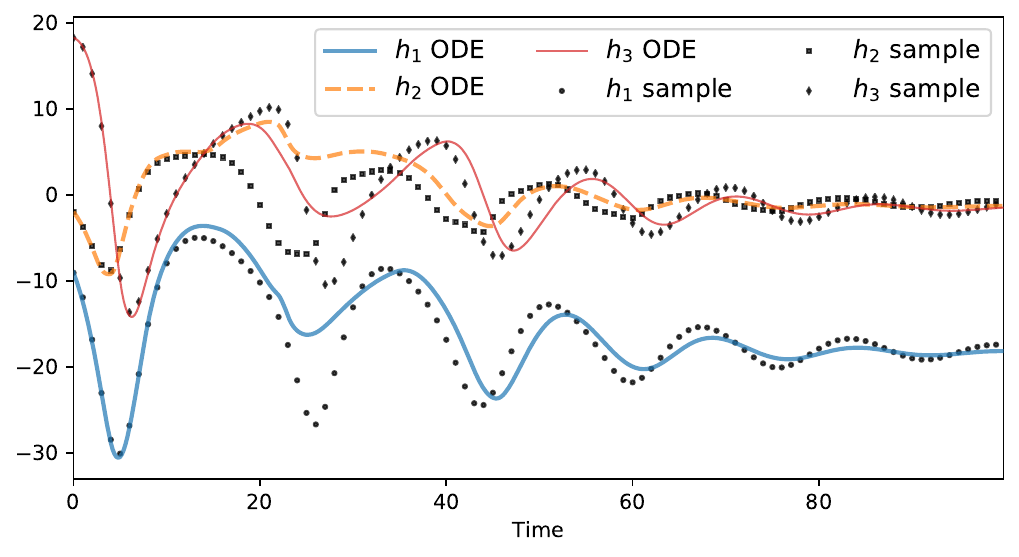}}
\subfigure[Trajectory 4, $m=7$]{
    \includegraphics[width=0.47\textwidth]{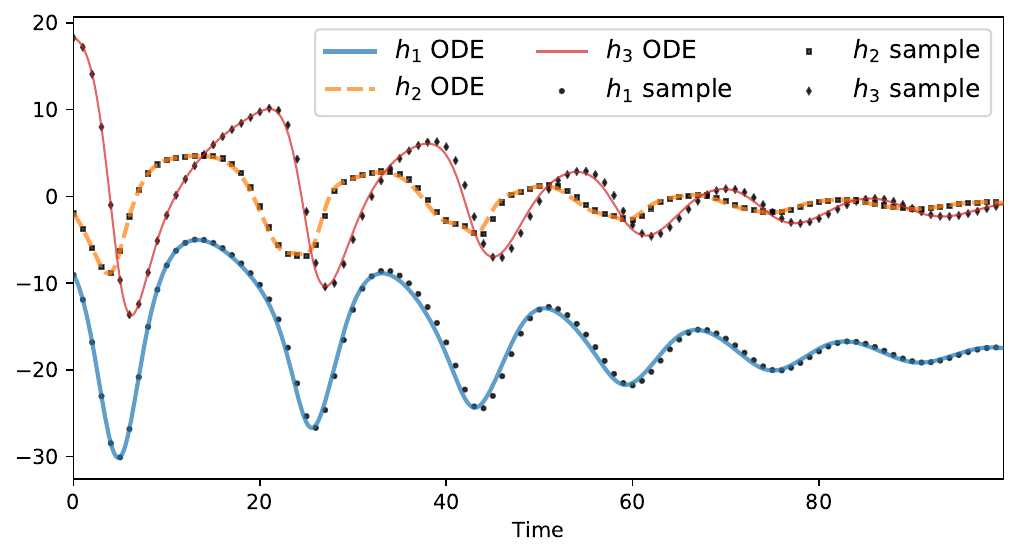}}
  \subfigure[Trajectory 89, $m=3$]{
  \includegraphics[width=0.47\textwidth]{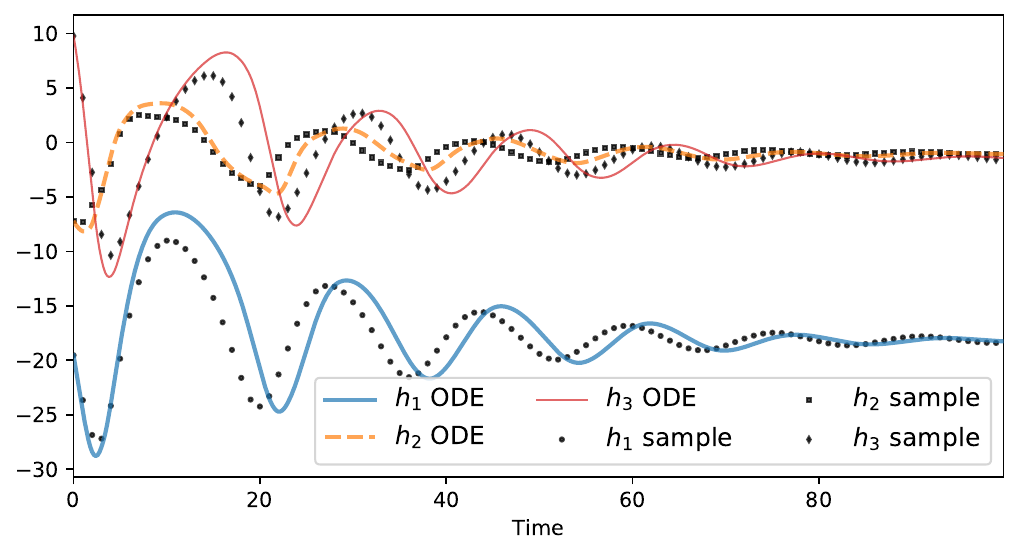}}
  \subfigure[Trajectory 89, $m=7$]{
  	\includegraphics[width=0.47\textwidth]{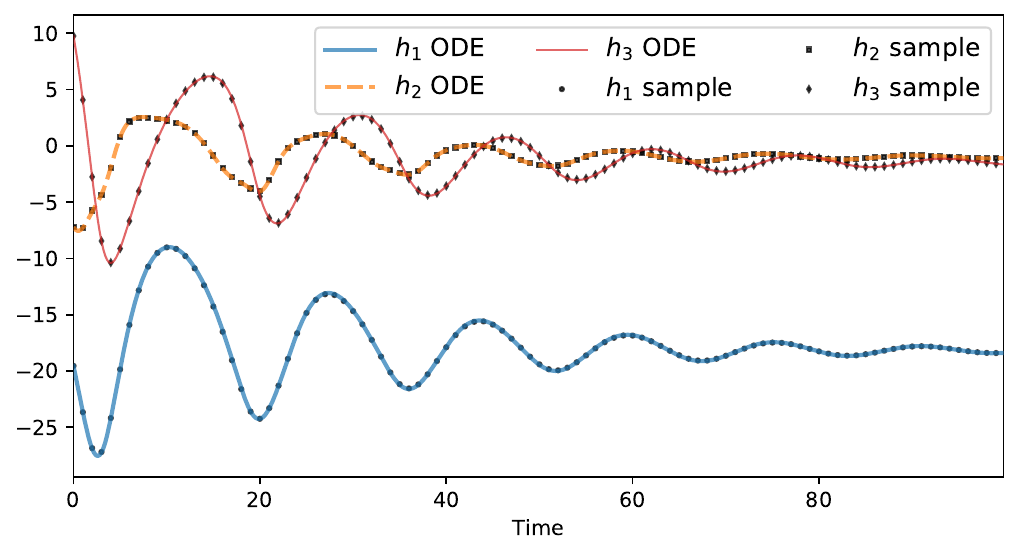}}
  \caption{\label{fig:RBCtroj}The first 3 principal components of one
    trajectory (Trajectory 4) in training set and one (Trajectory 89) in
    testing set for
    the RBC problem with $r=28$ and the corresponding simulation
    results of learned
    reduced models by OnsagerNets with $m=3$ and $7$, respectively.}
\end{figure}

To get an overview of the vector field that drives the learned ODE system
represented by OnsagerNet, we draw 2-dimensional projections of phase portraits
following several representative trajectories in Fig. \ref{fig:phd2dr28},
from which we see there are four
stable fixed points, two of which have relatively larger attraction basins,
the other two have very small attraction basins.
We numerically verified they are all linearly stable by checking the eigenvalues of Jacobian matrices of the learned system at those points.
The two fixed points
with larger attraction basin are very similar to those appearing in the Lorenz system
with $r<24.06$, which corresponds to the two fixed points resulting from the first pitchfork bifurcation, see, e.g. \citep[$q_+, q_-$ in Fig 2.1]{zhou_study_2010}, and
Fig. \ref{fig:Lorenz_r16}.

We also test the learned dynamics using
OnsagerNet by simulating 100 new numerical trajectories with random initial conditions
in the phase space.
We observe that they all have negative Lyapunov exponents, which suggests that the learned dynamics have no chaotic solutions.
This result is consistent with the original RBC system, which also does not exhibit chaos at these parameter values.
In this sense, OnsagerNet also preserves the qualitative properties of the full dynamics, despite being embedding in a much lower dimensional space.

\begin{figure}[!ht]
	\centering
	\includegraphics[width=0.8\textwidth]{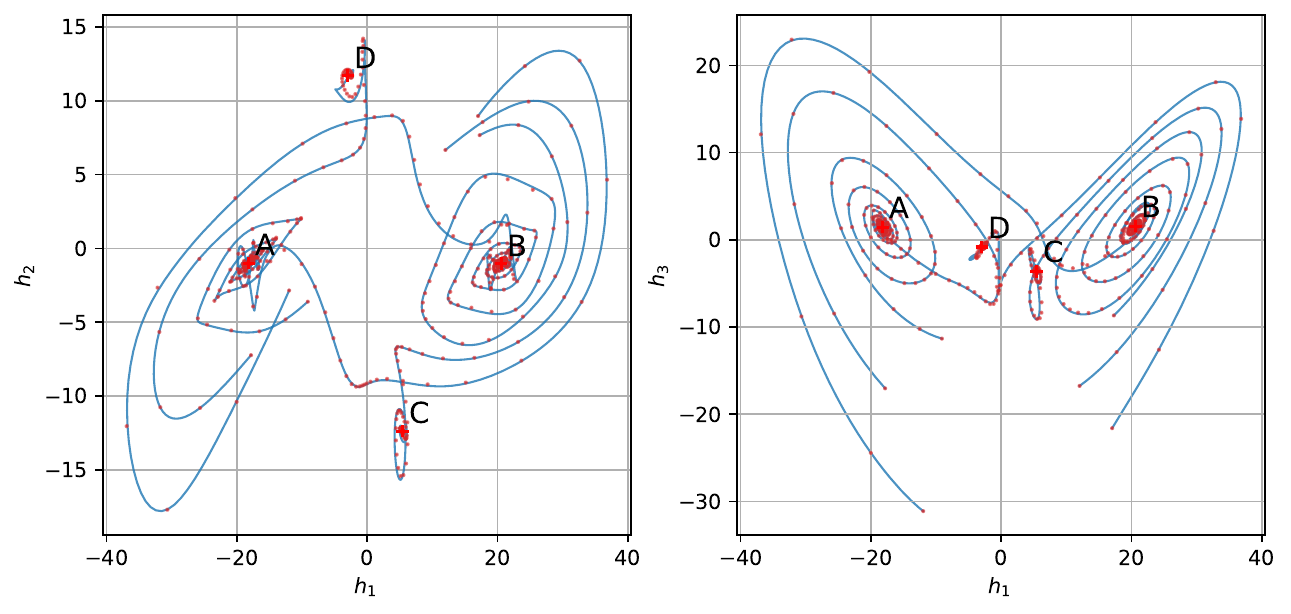}
	\caption{\label{fig:phd2dr28} Six representative trajectories from the
		sample data (red dots) and the corresponding ODE solutions (solid
		blue curves) evolved from the same initial values using learned ODE
		system for the RBC problem with $r=28$, $m=7$. The four stable critical
		points(red crosses) of the learned ODE system are numerically
		calculated and plotted.  Left: projection to $(h_1, h_2)$ plane;
		Right: projection to $(h_1, h_3)$ plane.}
\end{figure}

{\bf{The learned free energy function.} }
In Fig. \ref{fig:RBCpot}, we show the iso-surfaces of the learned
free energy function in the reduced OnsagerNet model for the RBC problem. When PCA
data is used, with 3 hidden variables, the free energy learned is irregular, which is very different to a physical free energy function one expect for a smooth dynamical system. As the
number of hidden variables is increased, the iso-surfaces of learned free energy function become more and more ellipsoidal, which means the free energy is more akin to a positive definite quadratic form. This is consistent with the exact PDE model. The use of AE in low dimensional case also helps to learn a better free energy function. This again highlights the preservation of physical principles in the OnsagerNet approach.

\begin{figure}[!ht]
   \centering
   \subfigure[PCA $m=3$]{
  \includegraphics[trim=15 20 50 60, clip,
  width=0.3\textwidth]{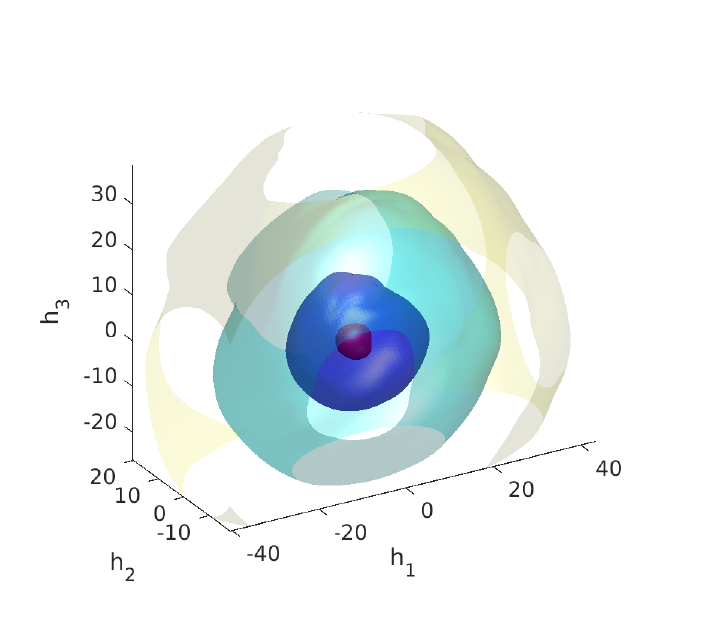}}
  \subfigure[PCA $m=7$]{\includegraphics[trim=15 20 50 60, clip,
  width=0.3\textwidth]{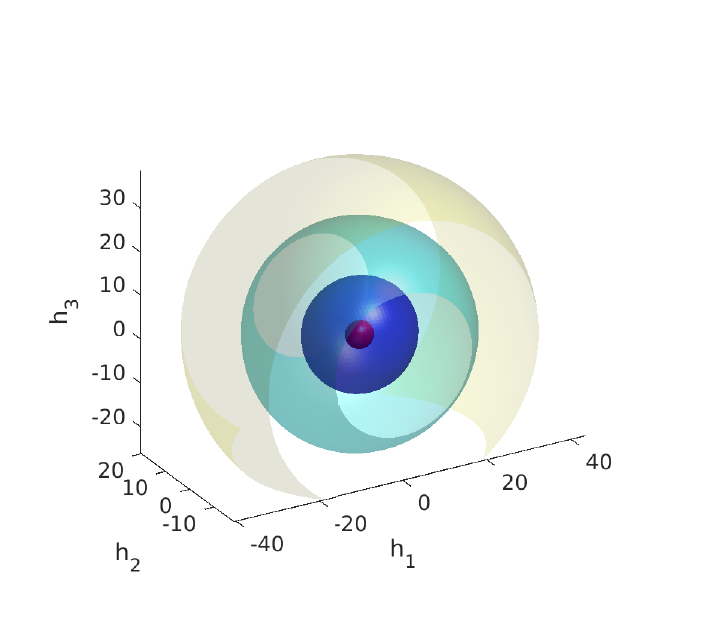}}
  \subfigure[Autoencoder + OnsagerNet
  $m=3$]{\includegraphics[trim=15 20 50 60, clip,
  width=0.3\textwidth]{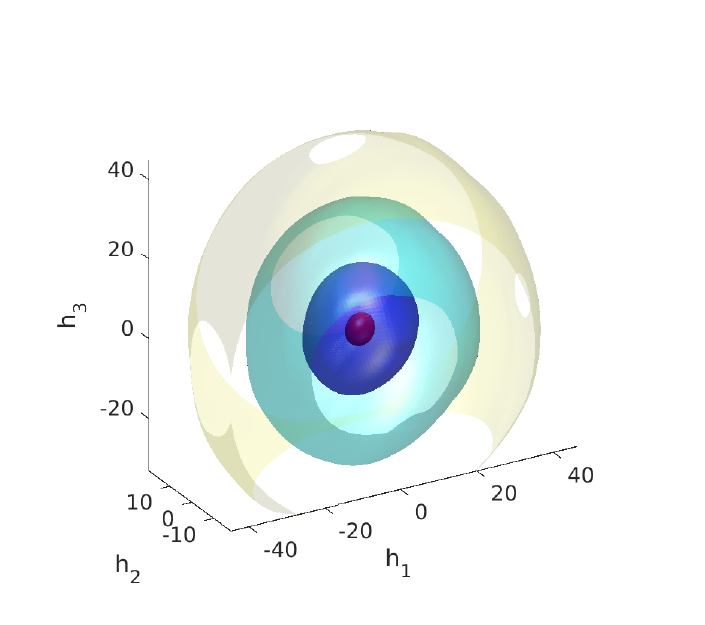}}
  \caption{\label{fig:RBCpot}The learned free energy by OnsagerNet with
    PCA and an autoencoder for the RBC problem with $r=28$. The dependence of the energy function on the first three
    principal components are shown.}
\end{figure}

\subsubsection{A higher Rayleigh number case r=84}

For $r=28$, the RBC problem has only fixed points as attractors.
To check the robustness of OnsagerNet to approximate dynamics with different properties, we present results of a higher Rayleigh number case with $r=84$.  The
corresponding Rayleigh number is $55,230.95$.
In this case, the RBC problem has four stable limit cycles.
However, starting from initial conditions considered by Lorenz's reduction, the solution needs to
evolve for a very long time to get close to the limit cycles.
Thus, whether or not the limit sets are limit cycles are not very clear
based on only observations of the sample data.
An overview of learned dynamics for $m=11$ is shown in Fig. \ref{fig:phd2dr84L},
where four representative trajectories close to the limit cycles
 together with critical points (saddles) and limit cycles calculated
from the learned model are plotted.
As before, from this figure we see limit cycles are accurately predicted by
reduced order model learned using OnsagerNet. Meanwhile, the saddles can be calculated from the
learned OnsagerNet.

\begin{figure}[htbp]
    \centering
    \includegraphics[width=0.8\textwidth]{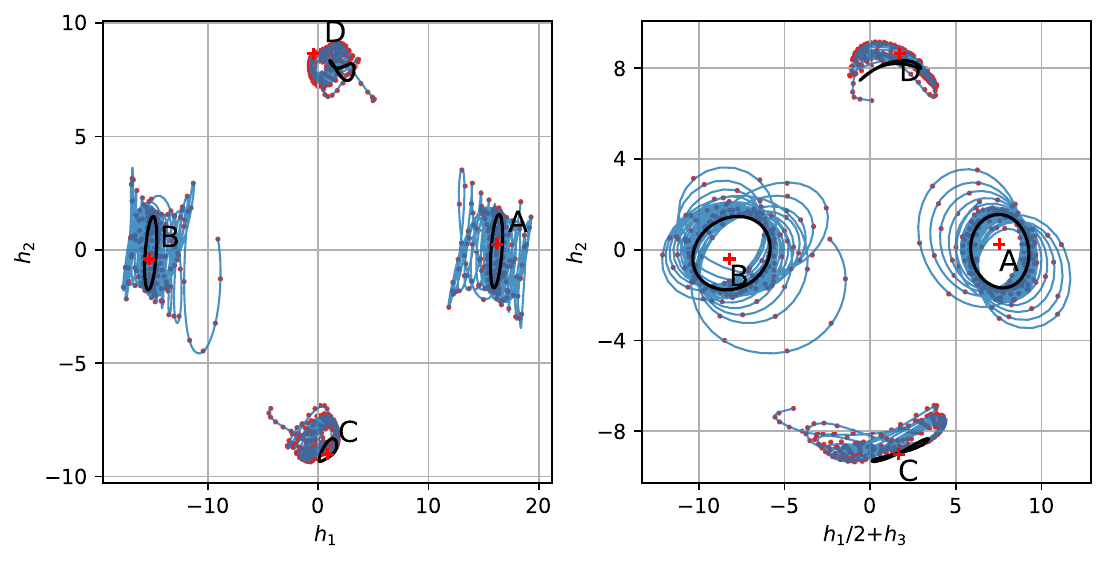}
    \caption{\label{fig:phd2dr84L} Four representative trajectories from
        the sample data (red dots) and the corresponding ODE solutions
        (solid blue curves) evolved from the initial values using learned ODE
        system ($m=11$) for the case $r=84$. The four unstable critical
        points (red
        crosses) and four stable limit cycles (black curves) of the learned
        ODE system are also plotted.  Left: projection to $(h_1, h_2)$
        plane; Right: projection to $(h_1/2+h_3, h_2)$ plane. }
\end{figure}

\begin{figure}[htbp]
    \centering
    \includegraphics[width=0.7\textwidth]{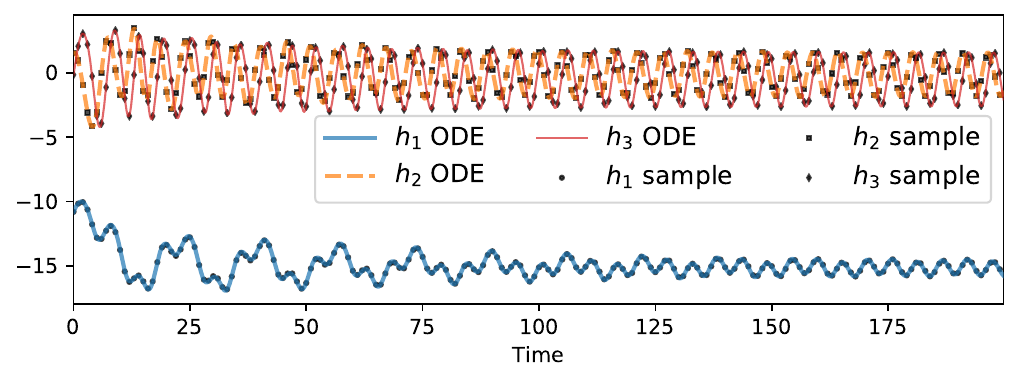}
    \caption{\label{fig:RBCtraj_r84}The first 3 principal components of an
        exact trajectory and the corresponding simulation results of learned
        reduced models by OnsagerNets trained using $11$ principal
        components for the RBC problem with $r=84$.}
\end{figure}

A typical trajectory in test set and
corresponding results by the learned dynamics are plotted in
Fig. \ref{fig:RBCtraj_r84}. These results supports the observation that OnsagerNet achieves
both quantitative trajectory accuracy and faithfully captures asymptotic behaviors of the
original high-dimensional system.

To check whether the learned reduced order model has
chaotic solutions, we carried out a numerical estimate of the largest Lyapunov indices of the trajectories in very long time simulations. Here, we did not find
any trajectory with a positive Lyapunov index, which suggest that for the
parameter setting used, the RBC problem has no chaotic solution, at least for
the type of initial conditions considered in this paper.

{
\subsection{Application to Rayleigh-B\'{e}nard convection in a wide range of
Rayleigh number\label{sec:nr}}

In this section, we build a reduced model that operates over a wide range of
Rayleigh
numbers, and hence we sample trajectories corresponding to ten different
$r$ values $\{8, 16, 22, 28, 42, 56, 70, 76, 80, 84\}$ in the range $[8, 84]$.
Parameters $\nu$, $\kappa$, $\Gamma$
are
fixed, and we vary $\alpha_0$ to obtain different $r$ values.
In all cases, the Prandtl number is fixed at $Pr =10/3$. The data generating
procedure is same to the fixed $r$ case, but since for multiple $r$ case,
the dimension of fast manifold is relatively high, we do not use the first 39
pairs of solution snapshots in the trajectories.

The first step is to construct the low-dimensional generalized coordinates
using PCA.
To account for varying Rayleigh numbers, we perform a common PCA on
trajectories with different $r$ values.
Consequently, we seek an OnsagerNet parameterization with quantities
$V,\tilde{M},\tilde{W}$ independent of the Rayleigh number $r$.
Examining~\eqref{nse1}-\eqref{nse2}, the $r$ dependence may
be regarded as strength of the external force, which is linear.
Thus, we seek external force $f$ of the OnsagerNet as an affine
function of $r$. But, different to the fixed $r$ case, here we assume
$f$ is a nonlinear function of $h$.

\subsubsection{Quantitative trajectory accuracy.}

We first show that despite being low-dimensional, the learned OnsagerNet
preserves trajectory-wise accuracy when compared with the full RBC equations.
We summarize the MSE between trajectories generated from the learned
OnsagerNet and the true RBC equations for different times in
Table~\ref{tbl:RBCres_nr}.
Observe that the model using only 7 coordinates ($m=7$)
gives good short time ($t=1$) prediction,
but has a relatively large error for the long time
$(t = 60)$ prediction. By increasing $m$ to $9$ and $11$, both the
short time and long time prediction accuracy increase.
We also tested generalization in terms of Rayleigh numbers by withholding data
for
$r=28,84$ and testing the learned models in these regimes
(Fig.~\ref{fig:phd2d}).
In all cases, the OnsagerNet dynamics remain a quantitatively accurate
reduction of the RBC system.

\begin{table}[tbhp]
    \begin{center}
        \caption{\label{tbl:RBCres_nr}Accuracy of learned models for RBC
        problem $r\in[8, 84]$.}
        \begin{tabular}{llllll}
            \hline\hline
            Dimension & $\tmop{MSE}_{\tmop{train}}$
            & $\tmop{MSE}_{\tmop{test}}$ & $E^{pred,rel}_{t = 1}$ &
            $E^{pred,rel}_{t
                = 60}$
            \\
            \hline
            $m$=7 $\ $ & 1.55$\times 10^{-2}\quad $ & 3.43$\times 10^{-2}\quad
            $ &
            7.07$\times
            10^{-4}\quad $ &
            6.04$\times 10^{-2}\quad $ \\
            $m$=9 & 2.63$\times 10^{-3}$ & 4.36$\times 10^{-3}$ & 6.84$\times
            10^{-4}$ &
            1.05$\times 10^{-2}$ \\
            $m$=11 & 2.14$\times 10^{-3}$ & 4.12$\times 10^{-3}$ & 5.12$\times
            10^{-4}$ &
            8.45$\times 10^{-3}$ \\
            \hline
        \end{tabular}
    \end{center}
\end{table}

\begin{figure}[tbhp]
    \centering
    \includegraphics[trim=0 10 0 10, clip, width=0.9\linewidth]
    {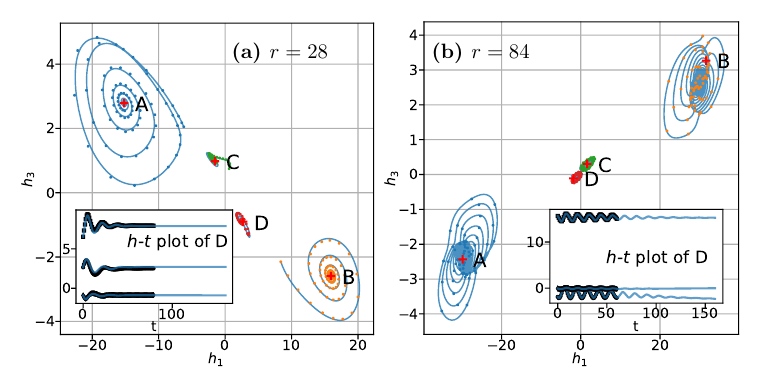}
    \caption{\label{fig:phd2d} Some representative trajectories from the
        sample data (colored dots) and the corresponding solutions of learned
        11-dimensional OnsagerNet ODEs (solid blue curves)
        from the same initial values for the RBC problem with $r=28,84$.
        The red crosses are fixed points calculated from the learned ODE
        systems.
    }
\end{figure}
\begin{figure}[tbhp]
    \centering
    \includegraphics[width=0.45\linewidth,]
    {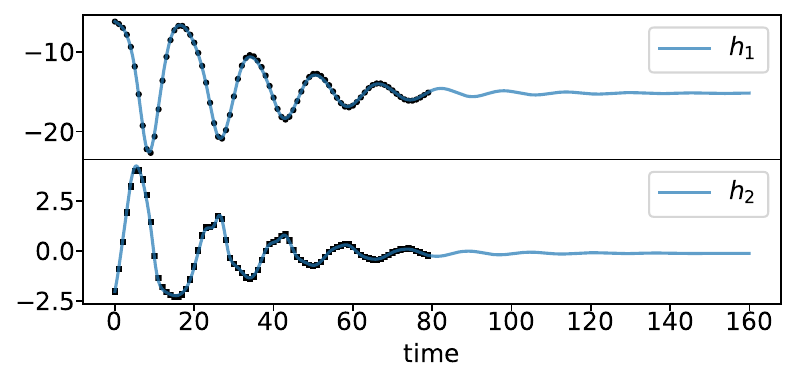}
    \includegraphics[width=0.45\linewidth,]
    {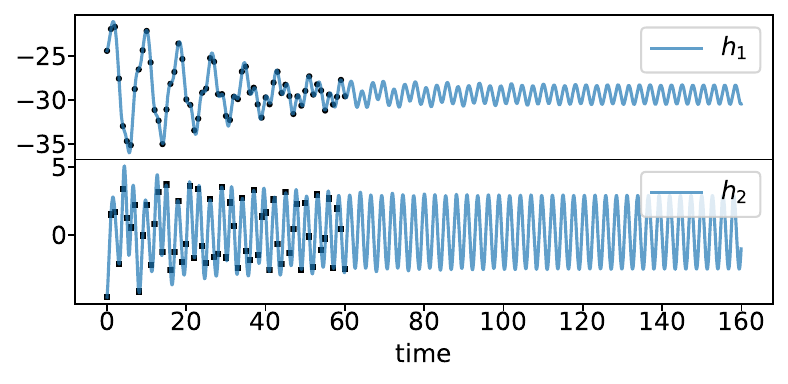}
    \caption{\label{fig:longtime}
        The long time comparison of trajectories
        originating from an initial condition converging to $A$
        for $r=28$(left panel), $84$(right panel).
        Black dots are RBC data and
        colored curves are predictions by the learned OnsagerNet.
    }
\end{figure}

\subsubsection{Qualitative reproduction of phase space structure.}
Next, we show that besides quantitative trajectory accuracy,
the qualitative aspects of the RBC, such as long time stability and
the nature of the attractor sets, are also adequately captured.
This highlights the fact that the approximation power
of OnsagerNet does not come at a cost of physical stability and relevance.

To get an overview of the vector field that drives the learned ODE system,
we draw 2-dimensional projections of phase portraits for several
representative trajectories at $r=28, 84$ in Fig. \ref{fig:phd2d}.
The data for these Rayleigh numbers are not included in the training set.
For the $r=28$ case, we observe four stable fixed points.
The two fixed points with larger attraction basins are similar to those
appearing in the Lorenz system with $r<24.06$, which corresponds to the two
fixed points
resulting from the first pitchfork bifurcation, see e.g., \cite[$q_+, q_-$
in Fig 2.1]{zhou_study_2010}.
Note that the Lorenz '63 model with $r=28$ is chaotic, but
the original RBC problem and our learned model have only fixed points as
attractors.
For $r=84$, the four attractors in the RBC problem become more complicated.
Starting from Lorenz-type initial conditions, the solutions need to
evolve for a very long time to get close to these attractors.
The results in Fig. \ref{fig:phd2d} show that the learned low-dimensional
models can accurately capture those complicated behavior.
Due to the fact that the $A, B$ attractors have larger attraction regions
than $C,D$, we have more $A,B$ type trajectories than $C,D$ type ones.
Thus, the vector field near fixed points $A,B$ are learned with higher
quantitative
accuracy (see Fig. \ref{fig:longtime}),
while $C,D$ type trajectory accuracy holds only for short times.
Nevertheless, in all cases, the asymptotic qualitative behavior of the
trajectories and
the attractor sets are faithfully captured.


As motivated earlier, each term in the OnsagerNet parameterization
has clear physical origin, and hence once we learn an accurate model,
we can study the behavior of each term to infer some physical characteristics
of the reduced dynamics.
In Fig.~\ref{fig:RBCpot_nr}, we plot the free energy $V$ in the learned
OnsagerNet
model
and compare it to the RBC energy function $\frac{1}{2}\|u\|^2 +
\frac{1}{2}\|\theta\|^2$ projected to the same reduced coordinates.
We observe that the shapes are similar with ellipsoidal iso-surfaces, but the
scales vary.
This highlights the preservation of physical structure in our approach.

\begin{figure}[tbhp]
    \centering
    \includegraphics[trim=20 40 20 40, clip,
    width=0.7\linewidth, 
    ]
    {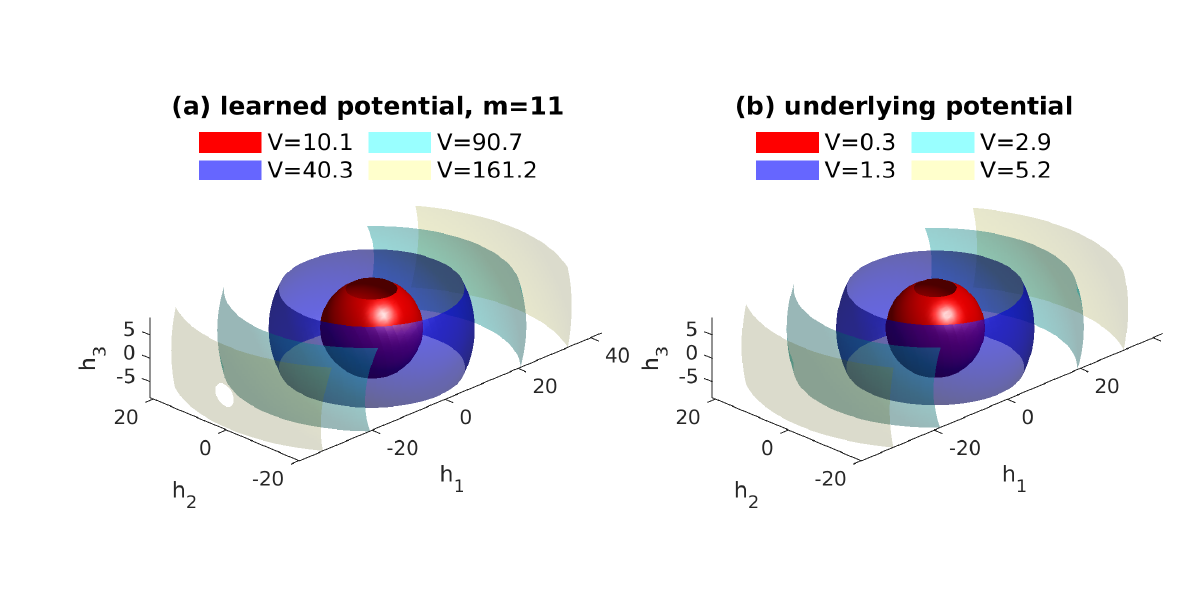}
    \caption{\label{fig:RBCpot_nr}The underlying and learned potential by
        OnsagerNet for the
        RBC problem. The iso-surfaces in first three
        principal component dimensions are shown.}
\end{figure}

Besides the learned potential, we can also study the diffusion matrix $M$, which
measures the rate of energy dissipation as a function of the learned
coordinates $h$.
In Fig.~\ref{fig:MMatrix}, we plot the eigenvalues of $M$
(which characterize dissipation rates along dynamical modes)
along the line from point $A$ to $B$ at $r=28$,
where we observe strong dependence on $h$.
This verifies that the OnsagerNet approach is different from
linear Galerkin type approximations
where the diffusive matrix $M$ is constant.
\begin{figure}[tb]
    \centering
    \includegraphics[width=0.6\linewidth,
    ]{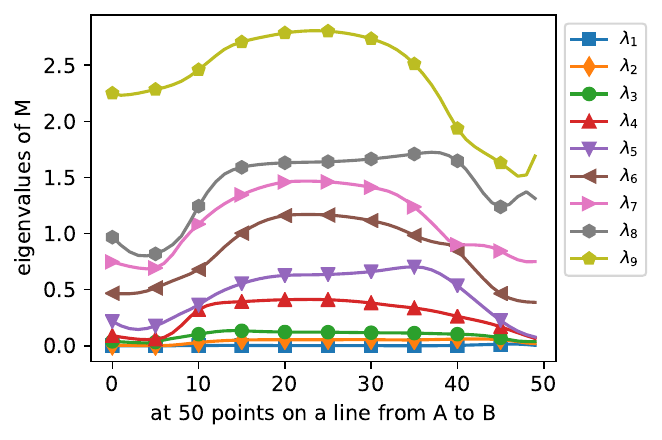}
    \caption{\label{fig:MMatrix}
        The eigenvalues of the diffusion matrix $M(h)$
        representing local dissipation rates
        along a line from $A$ to
        $B$ in a $r=28, m=9$ case.
        We can see clear deviation from a linear model where $M$ is constant.
    }
\end{figure}

}

In summary, using the OnsagerNet to learn a reduced model of the RBC equations,
we gained two main insights. First, it is possible, under the considered
conditions, to obtain an ODE model of relatively low dimension (3-11) that can
already capture the qualitative properties (energy functions, attractor sets)
while maintaining quantitative reconstruction accuracy {for a large
range
of Rayleigh number}. This is in line with Lorenz' intuition in building the '63
model. Second, unlike Lorenz highly truncated model, we show that under the
parameter settings investigated, the learned OnsagerNet, while having complex
dynamics, does not have chaotic behavior.

\section{Discussion and summary}

We have presented a systematic method to construct (reduced) stable and interpretable ODE systems by a machine learning approach based on a generalized Onsager principle.
Comparing to existing methods in the literature, our method has several distinct features.
\begin{itemize}
	\item Comparing to the non-structured
	machine learning approaches (e.g. \citep{bongardAutomatedReverseEngineering2007,brunton_discovering_2016,raissi_multistep_2018}, etc.), the dynamics learned by our approach have precise physical structure, which not only ensures
	the stability of the learned models automatically, but also gives physically interpretable quantities, such as free energy, diffusive and conservative terms, etc.
	\item Comparing to the existing structured approach, e.g. the Lyapunov function approach \citep{kolterLearningStableDeep2019}, symplectic structure approach \citep{jin_symplectic_2020,zhongDissipativeSymODENEncoding2020}, etc., our method, which relies on a principled generalization of the already general Onsager principle, has a more flexible structure incorporating both conservation and dissipation in one equation, which make it suitable for a large class of problems involving forced dissipative dynamics. In particular, the OnsagerNet structure we impose does not sacrifice short-term trajectory accuracy, but still achieves long term stability and qualitative agreement with the underlying dynamics.
	\item Different to the  linear multi-step method embedded training \citep{raissi_multistep_2018,xieNonintrusiveInferenceReduced2018} and recurrent neural networks \citep{panDataDrivenDiscoveryClosure2018,wangRecurrentNeuralNetwork2020}, we use multiple Runge-Kutta steps embedded in loss function for training. This can handle large and variable data sampling intervals in a straightforward manner.
	\item We proposed an isometry-regularized autoencoder to find the
	slow manifold for given trajectory data for dimension reduction, this is different to traditional feature extraction methods such as proper orthogonal decomposition (POD) \citep{lumley_stochastic_1970,holmes_turbulence_1996}, dynamic mode decomposition (DMD)
	and Koopman analysis \cite{schmid_dynamic_2010,rowley_spectral_2009,takeishi_learning_2017}, which
	only find important \emph{linear} structures.
	Our method is also different to the use of an autoencoder with sparse identification of nonlinear dynamics (SINDy) proposed by \citep{championDatadrivenDiscoveryCoordinates2019} where sparsity of the dynamics to be learned is used to regularize the autoencoder. The isometric regularization allows the autoencoder to be trained separately or jointly
	with ODE nets. Moreover, it is usually not easy to ensure long-time stability using these type of methods, especially when the dimension of the learned dynamics increases.
	\item As a model reduction method, different to the closure approximations approach \citep{wanDataassistedReducedorderModeling2018} that based on Mori-Zwanzig theory \citep{chorinOptimalPredictionMoriZwanzig2000}, which needs to work with an explicit mathematical form of underlying dynamical system and usually leads to non-autonomous differential equations, our approach uses only sampled trajectory data to learn autonomous dynamical systems, the learned dynamics can have very good quantitative accuracy and can be readily be further analyzed with traditional ODE analysis methods, or be used as surrogate models for fast online computation.
\end{itemize}

We have shown the versatility of OnsagerNet by applying it to
identify closed and non-closed ODE dynamics: the nonlinear pendulum and Lorenz system with chaotic attractor, and learn reduced order
models with high fidelity for the classical meteorological dynamical systems: the Rayleigh-B\'{e}nard convection problem.

While versatile, the proposed method can be further improved or extended
in several ways. For example, in the numerical results of the RBC problem, we see the accuracy is
limited by given sample data when we use enough hidden dimensions. One can use an active sampling strategy (see e.g. \cite{zhang2019active}, \cite{han_uniformly_2019}) together with OnsagerNet to further improve the accuracy, especially near saddle points.
Furthermore, for problems where transient solutions are of interest but lie on a very high-dimensional space, then directly learning PDEs might be a better approach than learning ODE systems. The learning of PDE using OnsagerNet can be accomplished by incorporating the differential operator filter constraints proposed in \citep{longPDENetLearningPDEs2018} into its components, e.g. the diffusive and conservative terms. Finally, for problems where sparsity (e.g. memory efficiency) is more important than accuracy, or a balance between these two are needed, one can either add a $l^1$ regularization on the weights of OnsagerNets into the training loss function or incorporate sparse identification methods in the OnsagerNet approach.

\section*{Acknowledgments}
We would like to thank Prof. Chun Liu, Weiguo Gao, Tiejun Li, Qi Wang,
and Dr. Han Wang for helpful discussions.
The work of HY and XT are supported by
NNSFC Grant 91852116, 11771439 and China Science Challenge Project
no. TZ2018001.
The work of WE is supported in part by a gift from iFlytek to Princeton University.
The work of QL is supported by the start-up grant under the NUS PYP programme.
The computations were partially performed on the LSSC4 PC cluster of State Key Laboratory of Scientific and Engineering Computing (LSEC).
\appendix
\section*{Appendix}

\section{Classical dynamics as generalized Onsager
principle}
\label{apd:gdyn}

Previously we showed how the $h$ dependence arise from projecting high-dimensional dynamics to one on reduced coordinates. There, it is assumed that the high-dimensional dynamics obey the generalized Onsager principle with constant diffusive and conservative terms. Here, we show how this assumption is sensible, by giving general classical dynamical systems that has such a representation.

\begin{theorem}
  Classical Hamilton system with Hamilton given by
  \[ H (x, q) = U (x) + \frac{1}{2 m} q^2 \] can be written in the form
  of Eq. {\eqref{eq:GOPmform}}.

  \begin{proof}
    The Hamilton equation for given $H$ is
    \[ \pt{x} = \frac{\partial H}{\partial q} = \frac{q}{m}, \]
    \[ \pt{q} = - \frac{\partial H}{\partial x} = - \nabla_x U. \]
    Taking $M = 0$, $W = \left(\begin{array}{cc}
      0 & 1\\
      - 1 & 0
    \end{array}\right)$, we have
    \[ W \left(\begin{array}{c}
         \pt{x}\\
         \pt{q}
       \end{array}\right) = - \left(\begin{array}{c}
         \nabla_x H\\
         \nabla_q H
       \end{array}\right) . \]
  \end{proof}
\end{theorem}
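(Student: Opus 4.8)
The plan is to exhibit an explicit choice of the matrices $M, W$, the potential $V$, and the forcing $g$ appearing in \eqref{eq:GOPmform} that reproduces Hamilton's equations for the given $H$. First I would take the generalized coordinate to be the full phase-space variable $h = (x, q)$ and write Hamilton's equations explicitly as $\pt{x} = \partial H / \partial q = q/m$ and $\pt{q} = - \partial H / \partial x = - \nabla_x U$. The key observation is that a Hamiltonian flow is already a skew-gradient flow, so I expect to identify the potential $V$ directly with the Hamiltonian $H$, to set the dissipation matrix $M = 0$ and the external force $g = 0$, and to let the anti-symmetric matrix $W$ encode the entire symplectic structure.

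Concretely, I would choose the constant anti-symmetric matrix $W = \big(\begin{smallmatrix} 0 & 1 \\ -1 & 0 \end{smallmatrix}\big)$ and then verify the identity $(M + W)\pt{h} = - \nabla V(h)$ by a direct $2 \times 2$ computation. The left-hand side becomes $W \pt{h} = (\pt{q}, -\pt{x})^T$, while the right-hand side is $-\nabla H = (-\nabla_x U, -q/m)^T$. Matching components row by row recovers precisely the two Hamiltonian equations above, which establishes the claim.

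Finally I would confirm that the structural hypotheses of \eqref{eq:GOPmform} are met: $M = 0$ is symmetric positive semi-definite, since $\ip{h}{M h} = 0 \geq 0$ for every $h$, and $W$ is anti-symmetric by construction, so that $M + W = W$ is invertible. There is essentially no obstacle in this argument; the only point demanding care is fixing the sign and orientation of $W$ so that $\pt{x}$ is paired with $\partial_q H$ and $\pt{q}$ with $\partial_x H$ carrying the correct signs, and this is entirely settled by the explicit matrix multiplication.
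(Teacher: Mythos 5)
Your proposal is correct and follows essentially the same route as the paper: the identical choice $M=0$, $W=\big(\begin{smallmatrix}0 & 1\\ -1 & 0\end{smallmatrix}\big)$, $V=H$, $g=0$, verified by the same explicit $2\times 2$ computation. Your extra check that $M$ is positive semi-definite, $W$ is anti-symmetric, and $M+W$ is invertible is a welcome but minor addition that the paper leaves implicit.
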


\begin{theorem}
  The deterministic damped Langevin dynamics
  \begin{equation}
    m \frac{d^2 x}{d t^2} = - \gamma \pt{x} - \nabla_x U (x) \label{eq:dLagevin}
  \end{equation}
  can be written in the form of Eq. {\eqref{eq:GOPmform}}.

  \begin{proof}
    Denote $v = \pt{x},$ then we have
    \[ \pt{x}  = v, \]
    \[ m \pt{v} + \gamma \pt{x}  = - \nabla_x U (x) . \]
    By letting $M = \left(\begin{array}{cc}
      \gamma & 0\\
      0 & 0
    \end{array}\right),$ $W = \left(\begin{array}{cc}
      0 & m\\
      - m & 0
    \end{array}\right)$, we have
    \[ (M + W) \left(\begin{array}{c}
         \pt{x}\\
         \pt{v}
       \end{array}\right) = - \left(\begin{array}{c}
         \nabla_x U (x)\\
         mv
       \end{array}\right) . \]
    This is of form {\eqref{eq:GOPmform}} with a new energy (total
    energy)
    \[ V (x, v) = U (x) + \frac{m}{2} v^2 . \]
  \end{proof}
\end{theorem}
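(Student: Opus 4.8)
The plan is to eliminate the second-order derivative in favor of a first-order system and then read off a symmetric positive semi-definite matrix $M$, an antisymmetric matrix $W$, and an energy $V$ that cast the dynamics into the target form \eqref{eq:GOPmform}. First I would introduce the velocity $v = \pt{x}$ and collect the phase variables into $h = (x, v)$, so that \eqref{eq:dLagevin} splits into the pair $\pt{x} = v$ and $m \pt{v} = - \gamma v - \nabla_x U(x)$. The decisive observation is that, since $\pt{x} = v$, the friction term $\gamma v$ equals $\gamma \pt{x}$; this is what allows the dissipation to be absorbed into a symmetric block acting on $\pt{h}$, rather than appearing as a force on the right-hand side.

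Next I would guess the total mechanical energy $V(x,v) = U(x) + \tfrac{m}{2} v^2$ as the potential, whose gradient is $\nabla V = (\nabla_x U(x),\, m v)$. With $g \equiv 0$ (the system is unforced), the target \eqref{eq:GOPmform} becomes $(M+W)\pt{h} = -(\nabla_x U,\, mv)$, and I would then look for a coefficient matrix whose symmetric part carries the damping and whose antisymmetric part carries the conservative inertial coupling. This is the natural extension of the preceding Hamiltonian theorem, where $M = 0$; here the friction forces a nontrivial symmetric block. The ansatz I expect to work is $M = \big(\begin{smallmatrix} \gamma & 0 \\ 0 & 0 \end{smallmatrix}\big)$ and $W = \big(\begin{smallmatrix} 0 & m \\ -m & 0 \end{smallmatrix}\big)$, which are manifestly symmetric positive semi-definite (for $\gamma \geqslant 0$) and antisymmetric, respectively.

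Finally I would verify by direct multiplication that this choice reproduces the dynamics: the first row of $(M+W)\pt{h}$ gives $\gamma v + m\pt{v}$, which when equated to $-\nabla_x U$ returns exactly the Langevin equation $m\pt{v} = -\gamma v - \nabla_x U$; the second row gives $-m\pt{x}$, which equated to $-mv$ simply re-encodes the definition $\pt{x}=v$. Since there is no genuine analytic difficulty here, the main obstacle is really the modeling step of identifying the correct energy and the correct dissipative--conservative split. In particular, the crux is recognizing that the off-diagonal entries $\pm m$ must do double duty, injecting the inertial term $m\pt{v}$ into the first equation while enforcing the kinematic relation $\pt{x}=v$ through the second.
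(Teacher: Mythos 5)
Your proposal is correct and follows essentially the same route as the paper: introduce $v=\pt{x}$, take the total energy $V(x,v)=U(x)+\tfrac{m}{2}v^2$, and choose $M=\bigl(\begin{smallmatrix}\gamma&0\\0&0\end{smallmatrix}\bigr)$, $W=\bigl(\begin{smallmatrix}0&m\\-m&0\end{smallmatrix}\bigr)$ so that $(M+W)\pt{h}=-\nabla V$ with $g\equiv 0$. The row-by-row verification you give matches the paper's computation exactly.
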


\begin{theorem}
  The dynamics described by generalized Poisson brackets, defined below,
  can be written in form
  {\eqref{eq:GOPdform}}. In the Poisson bracket approach, the system is
  described by generalized coordinates $(q_1, \ldots, q_n)$ and
  generalized momentum $(p_1, \ldots, p_n)$. Denote the Hamiltonian of the system
  as $H (q_1, \ldots, q_n ; p_1, \ldots, p_n)$. Then, the dynamics of the
  system is described by the equation {\citep{beris_thermodynamics_1994}}
  \begin{equation}
    F_t = \{ F, H \} - [F, H_{}] \label{eq:PoissonBracketDyn}
  \end{equation}
  where F is an arbitrary functional depending on the system variables. The
  reversible and irreversible contributions to the system are represented by
  the Poisson bracket $\{ \cdot, \cdot \}$ and the dissipation bracket
  $[\cdot, \cdot]$, respectively, which are defined as
  \[ \{ F, H \} = \sum_{i = 1}^n \frac{\partial F}{\partial q_i}
    \frac{\partial H}{\partial p_i} - \frac{\partial H}{\partial q_i}
    \frac{\partial F}{\partial p_i}, \]
  \[ [F, H] = J_F MJ_H^T, \quad J_{\{F,H\}} = \left[ \frac{\partial }{\partial q_1},
     \ldots \frac{\partial }{\partial q_n}, \frac{\partial }{\partial p_1},
     \ldots, \frac{\partial }{\partial p_n} \right]\{F,H\}, \]
  where $M$ is symmetric positive semi-definite.

  \begin{proof}
    Denote $(h_1, \ldots, h_m) = (q_1, \ldots q_{n,} p_1, \ldots, p_n)$, with
    $m = 2 n$. Equation {\eqref{eq:PoissonBracketDyn}} can be written as
    \[ F_t = (\nabla_h F)^T \pt{h} = (\nabla_h F)^T \left(\begin{array}{cc}
         0 & I_n\\
         - I_n & 0
       \end{array}\right) \nabla_h H - (\nabla_h F)^T M \nabla_h H. \]
    By taking $F = (h_1, \ldots, h_m)$, such that $\nabla_h F = I_m$, we
    obtain immediately
    \[ \pt{h} = - W \cdot \nabla_h H - M \cdot \nabla_h H = (M + W) (- \nabla_h
       H), \]
    where $W = \left(\begin{array}{cc}
      0 & - I_n\\
      I_n & 0
    \end{array}\right)$ is an anti-symmetric matrix, $M$ is a symmetric
    positive semi-definite matrix, which is of form {\eqref{eq:GOPdform}}.
  \end{proof}
\end{theorem}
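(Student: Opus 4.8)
The plan is to stack the canonical variables into a single state vector and recognize both brackets as bilinear forms in the gradients of $F$ and $H$, after which the arbitrariness of $F$ will force the ODE. First I would set $h = (q_1,\ldots,q_n,p_1,\ldots,p_n)^T \in \mathbb{R}^{2n}$, so that $m = 2n$, and write $\nabla_h F$ as the column vector stacking the partials $\partial F/\partial q_i$ and $\partial F/\partial p_i$. Since $F$ depends on the system variables, the chain rule turns the left-hand side of \eqref{eq:PoissonBracketDyn} into $F_t = (\nabla_h F)^T \pt{h}$, which is the form I want to match against the right-hand side.

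Next I would rewrite each bracket as a quadratic form in these gradients. The canonical Poisson bracket is exactly $\{F,H\} = (\nabla_h F)^T J \nabla_h H$ with the symplectic matrix $J = \big(\begin{smallmatrix} 0 & I_n \\ -I_n & 0\end{smallmatrix}\big)$, which one checks by the block multiplication $J\nabla_h H = (\partial_p H,\, -\partial_q H)^T$ and comparing with the defining sum. The dissipation bracket is the bilinear form $[F,H] = (\nabla_h F)^T M \nabla_h H$ with $M$ symmetric positive semi-definite. Substituting both into \eqref{eq:PoissonBracketDyn} gives $(\nabla_h F)^T \pt{h} = (\nabla_h F)^T (J - M)\nabla_h H$.

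The decisive step is to eliminate the arbitrary functional $F$. Because the identity holds for every $F$ and both sides are linear in the single gradient $\nabla_h F$ evaluated at the same point, I would let $F$ range over the coordinate functions $h_1,\ldots,h_m$, for which $\nabla_h F$ runs through the standard basis vectors (equivalently, take $F=h$ so that $\nabla_h F = I_m$). This peels off $(\nabla_h F)^T$ componentwise and yields the vector identity $\pt{h} = (J - M)\nabla_h H = -(M + W)\nabla_h H$, where $W = -J = \big(\begin{smallmatrix} 0 & -I_n \\ I_n & 0\end{smallmatrix}\big)$ is anti-symmetric. Identifying $V = H$, $\tilde{M} = M$ (symmetric positive semi-definite), $\tilde{W} = W$ (anti-symmetric) and $f \equiv 0$, this is precisely \eqref{eq:GOPdform}; note that, unlike the reduction argument in Appendix A, no matrix inversion is needed here since the bracket formulation already delivers the d-form directly.

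The only genuine subtlety is the $F$-elimination, and the coordinate-function choice makes it rigorous in one line, so I expect no serious obstacle. As a consistency check I would set $F = H$, giving $H_t = \{H,H\} - [H,H] = -(\nabla_h H)^T M \nabla_h H \le 0$, which confirms the expected energy decay from the positive semi-definiteness of $M$ and matches the dissipative reading of the second term in \eqref{eq:PoissonBracketDyn}.
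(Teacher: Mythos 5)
Your proof is correct and follows essentially the same route as the paper's: rewrite both brackets as bilinear forms in $\nabla_h F$ and $\nabla_h H$, then take $F$ to be the coordinate functions so that $\nabla_h F = I_m$ and read off $\pt{h} = -(M+W)\nabla_h H$. The added consistency check $H_t = -(\nabla_h H)^T M \nabla_h H \le 0$ is a nice touch but not part of the paper's argument.
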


\section{Basic properties of non-symmetric positive definite matrices}
\label{apd:psdmatrix}

We present some basic properties of real non-symmetric positive-definite
matrices required to show our results.

For a $n\times n$ real matrix, we call it {\it{positive definite}}, if
there exist a constant $\sigma>0$, such that $x^T A x \ge \sigma x^T x$,
for any $x\in \bbR^n$.   If $\sigma=0$ in
the above inequality, we call it {\it{positive semi-definite}}.

For any real matrix $A$, it can be written as a sum of a symmetric part
$\frac{A+A^T}{2}$ and a skew-symmetric part $\frac{A-A^T}{2}$. For the
skew-symmetric part, we have $x^T \frac{A-A^T}{2} x = 0$, for any
$x\in \bbR^n$.  So a non-symmetric matrix is positive (semi-) positive
definite if and only if its symmetric part is positive (semi-)definite.

The following theorem is used in deriving alternative forms of the generalized Onsager dynamics.

\begin{theorem}
	\label{thm:psdmatrix}
	1) Suppose $A$ is a positive semi-definite real matrix, then the
    real parts of its eigenvalues are all non-negative.  2) The inverse
    of a non-singular positive semi-definite is also positive
    semi-definite.
\end{theorem}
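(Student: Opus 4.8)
The plan is to treat the two statements separately, since each reduces to a short computation once the correct quadratic form is isolated.

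For part 1, I would take any eigenvalue $\lambda \in \mathbb{C}$ of $A$ together with a (possibly complex) unit eigenvector $v = p + \mathrm{i} q$, where $p, q \in \mathbb{R}^n$. The starting identity is $v^* A v = \lambda\, v^* v = \lambda$, with $v^*$ the conjugate transpose. Expanding the left-hand side using $v = p + \mathrm{i} q$ and the reality of $A$ gives
\begin{equation*}
  v^* A v = (p - \mathrm{i} q)^T A (p + \mathrm{i} q)
  = \big( p^T A p + q^T A q \big) + \mathrm{i} \big( p^T A q - q^T A p \big).
\end{equation*}
Taking real parts then yields $\operatorname{Re}(\lambda) = p^T A p + q^T A q$. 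Since $p$ and $q$ are real and $A$ is positive semi-definite, both terms on the right are non-negative, so $\operatorname{Re}(\lambda) \geqslant 0$, which settles part 1.

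For part 2, I would assume $A$ is non-singular and positive semi-definite and fix an arbitrary $x \in \mathbb{R}^n$. Setting $y := A^{-1} x$, so that $x = A y$, a direct substitution gives
\begin{equation*}
  x^T A^{-1} x = (A y)^T A^{-1} (A y) = y^T A^T y.
\end{equation*}
Because $y^T A^T y$ is a scalar it equals its own transpose $y^T A y$, which is non-negative by the positive semi-definiteness of $A$. Hence $x^T A^{-1} x \geqslant 0$ for every $x$, i.e. $A^{-1}$ is positive semi-definite.

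The only delicate point I anticipate is the complex-eigenvector bookkeeping in part 1: for a non-symmetric $A$ the imaginary cross terms $p^T A q - q^T A p$ need not vanish, so one must be careful not to claim they do. The resolution is that these terms are simply irrelevant, since only the real part of the relation $v^* A v = \lambda$ is used, and that real part is exactly the sum of two real quadratic forms to which the hypothesis directly applies. Everything else is routine real and complex linear algebra.
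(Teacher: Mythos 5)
Your proof is correct and follows essentially the same route as the paper's: part 1 extracts the real part of $v^* A v = \lambda v^* v$ to reduce to the sum of two real quadratic forms $p^T A p + q^T A q$ (the paper writes this out with $z = x + \mathrm{i}y$ and a block-matrix bookkeeping, but it is the same computation), and part 2 uses the identical substitution $y = A^{-1}x$ to get $x^T A^{-1} x = y^T A y \geqslant 0$. No gaps.
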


\begin{proof}
  Suppose that $\lambda = \eta + i \mu$ is an eigenvalue of $A$, the
  corresponding eigenvector is $z = x + iy$, then
  \[ Ax + iAy = (\eta x - \mu y) + i (\eta y + \mu x). \] So
  \[ \left(\begin{array}{cc}
             A & 0\\
             0 & A
	\end{array}\right) \left(\begin{array}{c}
                               x\\
                               y
	\end{array}\right) = \left(\begin{array}{cc}
                                 \eta I & 0\\
                                 0 & \eta I
	\end{array}\right) \left(\begin{array}{c}
                               x\\
                               y
	\end{array}\right) + \left(\begin{array}{cc}
                                 0 & - \mu I\\
                                 \mu I & 0
	\end{array}\right) \left(\begin{array}{c}
                               x\\
                               y
	\end{array}\right) . \]
Left multiply the equation by $(x^T, y^T)$ to get (this is the real part
of $\bar{z}^T Az = \bar{z}^T \eta z$ )
\[ x^T Ax + y^T Ay = \eta x^T x + \eta y^T y = \eta (x^T x +
  y^T y) . \] If $A$ is positive semi-definite, then we obtain
$\eta \geqslant 0$, 1) is proved.

Now suppose that $A$ positive semi-definite and invertible, then for any
$x \in \mathbb{R}^n,$ we can define $y$ by $Ay = x$ to get
\[ x^T A^{- 1} x = y^T A^T A^{- 1} Ay = y^T A^T y = y^T Ay \geqslant
  0. \] The second part is proved.
\end{proof}

Note that the converse of the first part in Theorem \ref{thm:psdmatrix}
is not true. An simple counterexample is $A = \left(\begin{array}{cc}
                                                     3 & 2\\
                                                     - 2 & - 1
\end{array}\right)$, whose eigenvalues are $\lambda_{1, 2} = 1$. But the
eigenvalues of its symmetric part are $\lambda_{1, 2} = - 1, 3$.

\bibliography{OnsagerNet}

\end{document}